\documentclass[12pt]{amsart} 
\usepackage{amssymb,amsmath,amsthm,amsfonts}
\usepackage {latexsym,enumerate}
\usepackage{bbm}
\usepackage{color,mathabx}

\include{srctex}

\usepackage{marginnote}
\usepackage[letterpaper, top=1.2in, bottom=1.2in, outer=1.1in, inner=1.1in, heightrounded, marginparwidth=2.5cm, marginparsep=2mm]{geometry}

\linespread{1.4}

\allowdisplaybreaks
\sloppy




\newcommand{\nc}{\newcommand}
\nc{\les}{\lesssim}
\nc{\nit}{\noindent}
\nc{\nn}{\nonumber}
\nc{\D}{\partial}
\nc{\diff}[2]{\frac{d #1}{d #2}}
\nc{\diffn}[3]{\frac{d^{#3} #1}{d {#2}^{#3}}}
\nc{\pdiff}[2]{\frac{\partial #1}{\partial #2}}
\nc{\pdiffn}[3]{\frac{\partial^{#3} #1}{\partial{#2}^{#3}}}
\nc{\abs}[1] {\lvert #1 \rvert}
\nc{\cAc}{{\cal A}_c}
\nc{\cE}{{\cal E}}
\nc{\cF}{{\cal F}}
\nc{\cP}{{\cal P}}
\nc{\cV}{{\cal V}}
\nc{\cQ}{{\cal Q}}
\nc{\cGin}{{\cal G}_{\rm in}}
\nc{\cGout}{{\cal G}_{\rm out}}
\nc{\cO}{{\cal O}}
\nc{\Lav}{{\cal L}_{\rm av}}
\nc{\cL}{{\cal L}}
\nc{\cB}{{\cal B}}
\nc{\cZ}{{\cal Z}}
\nc{\cR}{{\cal R}}
\nc{\cT}{{\cal T}}
\nc{\cY}{{\cal Y}}
\nc{\cX}{{\cal X}}
\nc{\cXT}{{{\cal X}(T)}}
\nc{\cBT}{{{\cal B}(T)}}
\nc{\vD}{{\vec \mathcal{D}}}
\nc{\efield}{\mathcal{E}}
\nc{\vE}{{\vec \efield}}
\nc{\vB}{{\vec \mathcal{B}}}
\nc{\vH}{{\vec \mathcal{H}}}
\nc{\mR}{\mathcal{R}}
\nc{\mG}{\mathcal{G}}
\nc{\ty}{{\tilde y}}
\nc{\tu}{{\tilde u}}
\nc{\tV}{{\tilde V}}
\nc{\Pc}{{\bf P_c}}
\nc{\bx}{{\bf x}}
\nc{\bX}{{\bf X}}
\nc{\bXYZ}{{\bf XYZ}}
\nc{\oo}{\"o}
\nc{\bY}{{\bf Y}}
\nc{\bF}{{\bf F}}
\nc{\bS}{{\bf S}}
\nc{\dV}{{\delta V}}
\nc{\dE}{{\delta E}}
\nc{\TT}{{\Theta}}
\nc{\dPsi}{{\delta\Psi}}
\nc{\order}{{\cal O}}
\nc{\Rout}{R_{\rm out}}
\nc{\eplus}{e_+}
\nc{\eminus}{e_-}
\nc{\epm}{e_\pm}
\nc{\eps}{\varepsilon}
\nc{\vnabla}{{\vec\nabla}}
\nc{\G}{\Gamma}
\nc{\w}{\omega}
\nc{\mh}{h}
\nc{\mg}{g}
\nc{\vphi}{\varphi}
\nc{\tlambda}{\tilde\lambda}
\nc{\be}{\begin{equation}}
\nc{\ee}{\end{equation}}
\nc{\ba}{\begin{eqnarray}}
\nc{\ea}{\end{eqnarray}}

\nc{\g}{\gamma}
\nc{\ol}{\overline}

\newtheorem{theorem}{Theorem}[section]
\newtheorem{lemma}[theorem]{Lemma}
\newtheorem{prop}[theorem]{Proposition}
\newtheorem{corollary}[theorem]{Corollary}

\newtheorem{rmk}[theorem]{Remark}

\nc{\pT}{\partial_T}
\nc{\pz}{\partial_z}
\nc{\pt}{\partial_t}
\nc{\la}{\langle}
\nc{\ra}{\rangle}
\nc{\infint}{\int_{-\infty}^{\infty}}
\nc{\halfwidth}{6.5cm}
\nc{\figwidth}{10cm}
\newcommand{\f}{\frac}

\nc{\nlayers}{L} \nc{\nsectors}{M}
\nc{\indicator}{\mathbf{1}}
\nc{\Rhole}{R_{\rm hole}}
\nc{\Rring}{R_{\rm ring}}
\nc{\neff}{n_{\rm eff}}
\nc{\Frem}{F_{\rm rem}}
\nc{\R}{\mathbb R}
\nc{\C}{\mathbb C}
\nc{\Z}{\mathbb Z}
\nc{\N}{\mathbb N}
\nc{\DD}{\Delta}
\nc{\cD}{\mathcal D}
\nc{\Kato}{\mathcal K}
\nc{\lnorm}{\left\|}
\nc{\rnorm}{\right\|}
\nc{\rnormp}{\right\|_{\ell^{p,\eps}}}
\nc{\rar}{\rightarrow}
\sloppy

\begin{document}

\begin{abstract}

	We prove Strichartz estimates for the Schr\"odinger equation in $\R^n$, $n\geq 3$, with a Hamiltonian $H = -\Delta + \mu$. The perturbation $\mu$ is a compactly supported measure in $\R^n$ with dimension $\alpha > n-(1+\frac{1}{n-1})$. The main intermediate step is a local decay estimate in $L^2(\mu)$ for both the free and perturbed Schr\"odinger evolution.
\end{abstract}

\title[Strichartz Estimates with a Measure-Valued Potential]{Strichartz Estimates for the Schr\"odinger Equation with a Measure-Valued Potential}

\author{M. Burak Erdo\smash{\u{g}}an, Michael Goldberg, William R. Green}
\thanks{The first author was partially supported by NSF grant DMS-1501041.
The second author is supported by Simons Foundation Grant 281057. The third author is supported by Simons Foundation Grant 511825.}  
\address{Department of Mathematics \\
University of Illinois \\
Urbana, IL 61801, U.S.A.}
\email{berdogan@illinois.edu}

\address{Department of Mathematics\\
University of Cincinnati \\
Cincinnati, OH 45221 U.S.A.}
\email{goldbeml@ucmail.uc.edu}
\address{Department of Mathematics\\
Rose-Hulman Institute of Technology \\
Terre Haute, IN 47803, U.S.A.}
\email{green@rose-hulman.edu} 

\maketitle

\section{Introduction}

The dispersive properties of the free Schr\"odinger semigroup $e^{it\Delta}$
are described in many ways,
with one of the most versatile estimates being the family of Strichartz
inequalities
\begin{equation} \label{eq:freeStrichartz}
\| e^{it\Delta}u\|_{L^p_t L^q_x} \les \|u\|_{L^2(\R^n)}
\end{equation}
over the range $2 \leq p,q \leq \infty$, 
$\frac{2}{p} + \frac{n}{q} = \frac{n}{2}$, except for the endpoint 
$(p,q) = (2,\infty)$.
There is a substantial body of literature devoted to establishing Strichartz
inequalities and other dispersive bounds for the linear Schr\"odinger evolution
of perturbed operators $H = -\Delta + V(x)$. \cite{Strichartz77,GiVe85,KT}  prove Strichartz inequalities for the free evolution. The latter two of these, as well as \cite{RoSc04} create a framework for extending them to perturbed Hamiltonians so long as the Schr\"odinger semigroup has suitable $L^1\to L^\infty$ dispersive bounds or $L^2(\R^n\times \R)$ smoothing.  This strategy has been used to establish Strichartz estimates for the Schr\"odinger evolution for electric \cite{JoSoSo91}, magnetic \cite{ErGoSc} and time-periodic \cite{Go09} perturbations. 
Most commonly $V(x)$ is assumed
to exhibit pointwise polynomial decay or satisfying an integrability criterion such as belonging to a space $L^r_{loc}(\R^n)$
for some $r \geq \frac{n}{2}$.
Our goal in this paper is to show that Strichartz inequalities hold for a class
of short-range potentials $V(x)$ that include measures $\mu(dx)$ as admissible local
singularities.

Measure-valued potentials are often considered in one dimension; the operator
$-\frac{d^2}{dx^2} + c\delta_0$ is often the subject of exercises in an introductory
quantum mechanics course.  In higher dimensions there are several plausible
generalizations of this example.  The three-dimensional Schr\"odinger operator
$H = -\Delta + \sum c_j \delta(x_j)$ is studied in~\cite{DaPiTe06} 
and~\cite{DeMiScYa18}, and in two dimensions in \cite{CorMiYa}.  Here, the
singularity of the potential imposes boundary conditions at each point $x_j$
for functions belonging to the domain of $H$.  As an eventual consequence,
linear dispersive and Strichartz inequalities hold only on a subset of the
range described above.

The potentials considered in this paper are less singular than a delta-function in $\R^n$,
but still not absolutely continuous with respect to Lebesgue measure.
The surface measure of a compact hypersurface $\Sigma \subset \R^n$
is a canonical example of an admissible potential we consider. 
More generally we work with compactly supported
fractal measures (on $\R^n$) of a sufficiently high dimension.
The exact threshold will be determined in context.  
Arguments regarding the self-adjointness of $H$ require a dimension
greater than $n-2$ so that multiplication by $\mu$ remains compact relative to
the Laplacian.
We are forced to increase the threshold dimension to $n - (1+ \frac{1}{n-1})$
in the proof of the local decay and Strichartz estimates.
Under these conditions, and a modest assumption about the spectral properties
of $H$, we prove that the entire family of Strichartz
inequalities~\eqref{eq:freeStrichartz} is preserved with the possible exception of the
$(p,q)=(2, \frac{2n}{n-2})$ endpoint.

With $B(x,r)$ a ball of radius $r$ centered at $x\in \R^n$,
we say that a compactly supported signed measure $\mu$ is
{\em $\alpha$-dimensional} if it satisfies
\begin{equation} \label{eq:dimension}
|\mu|(B(x,r)) \leq C_\mu r^{\alpha} \ \ {\rm for\ all}\ r>0\ {\rm and}\ 
x \in \R^n
\end{equation}
Nontrivial $\alpha$-dimensional measures exist for any $\alpha \in [0, n]$.

The first obstruction to Strichartz estimates with
a Hamiltonian $H = -\Delta + V$ is the possible existence of bound states,
functions $\psi \in L^2(\R^n)$ that solve $ H\psi = E \psi$ for some
real number $E$.  Each bound state gives rise to a solution of the 
Schr\"odinger equation $e^{itH}\psi(x) =e^{itE}\psi(x)$, which
satisfies~\eqref{eq:freeStrichartz} only for $(p,q) = (\infty, 2)$ and
no other choice of exponents.

Our main result asserts that the perturbed evolution $e^{itH}$ satisfies
Strichartz estimates once all bound states of $H$ are projected away.
We impose additional spectral assumptions that all eigenvalues of $H$ 
are strictly negative, and that there is no resonance at zero.
In this paper we say a resonance occurs at $\lambda$ when the equation
\begin{equation*}
\psi + (-\Delta - (\lambda \pm i0))^{-1}\mu\psi = 0
\end{equation*}
has nontrivial solutions belonging to the Sobolev space $\dot{H}^1(\R^n)$
but not to $L^2$ itself.  

Thanks to the compact support of $\mu$, one can easily show that resonances are
impossible at $\lambda < 0$, and can only occur at $\lambda = 0$ in dimensions
3 and 4.  We show in Section~\ref{sec:LowEnergy}
that resonances do not occur when $\lambda > 0$.  Eigenvalues at zero are
possible provided the negative part of the potential is large enough.
Positive eigenvalues are known to be absent for a wide class of potentials
(see~\cite{KoTa06}) covering some (but not all) of the measures considered
here, see Remark~\ref{rmk:eval}.

To state our results, we define the following $L^p$ spaces.  For $\mu$ a signed measure on $\R^n$, we define
\be\label{eq:LpVdef}
	L^p(\mu):=\left\{ f:\R^n\to \mathbb C \,:\, \int_{\R^n } |f|^p\, d|\mu|<\infty \right\}
\ee 
for $1\leq p< \infty$. With the natural, minor modification one can define $L^\infty(\mu)$.
It is worthwhile to note that multiplication by $\mu$ is an isometry from $L^p(\mu)$ to $L^{p'}(\mu)^*$ with $p'$ the H\"older conjugate of $p$ for any $1\leq p\leq\infty$.  This can be seen easily by using the natural duality pairing. Throughout the paper we will take particular advantage of the fact that multiplication by $\mu$ maps $L^2(\mu)$ to its dual space.  Finally, let $P_{ac}$ denotes projection
onto the continuous spectrum of $-\Delta + \mu$.

\begin{theorem} \label{thm:main}
Let $\mu$ be a compactly supported signed measure on $\R^n$ of dimension
$\alpha > n - (1 + \frac{1}{n-1})$.
If the Schr\"odinger operator $-\Delta + \mu$ has no resonance
at zero and no eigenvalues at any $\lambda \geq 0$, then for each
$f \in L^2(\R^n)$ we have the local decay bounds
\begin{align} \label{eq:freesmoothing}
\|e^{it\Delta}f\|_{L^2_t L^2(\mu)} &\les \|f\|_2 \\
\|e^{it(-\Delta + \mu)}P_{ac}f\|_{L^2_t L^2(\mu)} &\les \|f\|_2 \label{eq:smoothing}
\end{align}
and the Strichartz inequalities
\begin{equation} \label{eq:Strichartz}
\|e^{it(-\Delta+\mu)}P_{ac}f\|_{L^p_tL^q_x} \les \|f\|_{2}
\end{equation}
for admissible pairs $(p,q)$ with $\frac{2}{p} + \frac{n}{q} = \frac{n}{2}$ 
and $p > 2$. 
\end{theorem}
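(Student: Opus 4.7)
\bigskip

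\noindent\textbf{Proof proposal.} My plan is to prove the three inequalities in the stated order, since each feeds into the next via standard $TT^*$ and Duhamel arguments, with the genuine analytic work concentrated in the first (free) local decay estimate.

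\medskip

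\noindent\emph{Step 1: Free local decay \eqref{eq:freesmoothing}.} View multiplication by $\mu$ as a bounded operator $M\colon L^2(\mu)\to L^2(\mu)^*$. By $TT^*$ applied to $f\mapsto e^{it\Delta}f$ with values in $L^2_tL^2(\mu)$, and Plancherel in $t$ combined with the spectral representation of $-\Delta$, \eqref{eq:freesmoothing} reduces to the uniform bound
\begin{equation*}
\sup_{\lambda\in\R}\bigl\|M\,R_0(\lambda+i0)\,M\bigr\|_{L^2(\mu)\to L^2(\mu)}<\infty.
\end{equation*}
I would split $\lambda$ into low ($|\lambda|\lesssim 1$) and high ($|\lambda|\gg 1$) regimes. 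For low $\lambda$, the Green's kernel behaves like $|x-y|^{-(n-2)}$ up to bounded corrections, and one gets the bound directly from the dimension hypothesis $\alpha>n-2$ by a Schur-type estimate against $d|\mu|$. For high $\lambda$, write the resolvent kernel via its oscillatory expansion; the decaying factor $|x-y|^{-(n-1)/2}$ combined with the oscillation $e^{i\sqrt{\lambda}|x-y|}$ must be controlled against $\mu\otimes\mu$. The precise dimension threshold $\alpha>n-(1+\tfrac{1}{n-1})$ is exactly what a Stein-Tomas/Agmon-Kato-Kuroda type restriction estimate relative to the fractal measure $\mu$ demands for the oscillatory part to gain the decay in $\lambda$ needed to close the uniform bound.

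\medskip

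\noindent\emph{Step 2: Perturbed local decay \eqref{eq:smoothing}.} Writing $U_0(\lambda):=M R_0(\lambda+i0)M$ and $U_H(\lambda):=M R_H(\lambda+i0)M$ on $L^2(\mu)$, the resolvent identity $R_H=R_0-R_0\mu R_H$ gives the algebraic relation
\begin{equation*}
U_H(\lambda)=\bigl(I+U_0(\lambda)\bigr)^{-1}U_0(\lambda).
\end{equation*}
By Step 1, $U_0(\lambda)$ is uniformly bounded, so the issue is the uniform invertibility of $I+U_0(\lambda)$ on $L^2(\mu)$. The hypothesis that $\alpha$ exceeds $n-2$ makes $U_0(\lambda)$ compact, and the hypotheses on the absence of positive eigenvalues and of a zero resonance give pointwise invertibility (for $\lambda\geq 0$ via the low-energy analysis of Section~\ref{sec:LowEnergy}; for $\lambda<0$, compact-support arguments already rule out resonances/embedded eigenvalues). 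Continuity of $\lambda\mapsto U_0(\lambda)$ and the decay $\|U_0(\lambda)\|\to 0$ as $|\lambda|\to\infty$ (from the high-energy analysis of Step~1) then upgrade pointwise invertibility to uniform invertibility on all of $\R$. Once $\sup_\lambda\|U_H(\lambda)\|<\infty$ is established, the $TT^*$/Plancherel argument of Step 1 delivers \eqref{eq:smoothing} after projecting away any bound states.

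\medskip

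\noindent\emph{Step 3: Strichartz \eqref{eq:Strichartz}.} By Duhamel's formula,
\begin{equation*}
e^{itH}P_{ac}f=e^{it\Delta}f-i\int_0^t e^{i(t-s)\Delta}\bigl(\mu\cdot e^{isH}P_{ac}f\bigr)\,ds.
\end{equation*}
The homogeneous term is controlled by the free estimate \eqref{eq:freeStrichartz}. For the Duhamel term, the free inequalities \eqref{eq:freeStrichartz} and \eqref{eq:freesmoothing} say that $T\colon f\mapsto e^{it\Delta}f$ maps $L^2_x$ to both $L^p_tL^q_x$ and $L^2_tL^2(\mu)$. Composing $T$ with the $L^2(\mu)$-adjoint yields that $F\mapsto \int e^{i(t-s)\Delta}(\mu\,F(s))\,ds$ is bounded from $L^2_sL^2(\mu)$ into $L^p_tL^q_x$; the Christ--Kiselev lemma (available since $p>2$) converts this into the retarded bound for $\int_0^t$. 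Plugging in $F(s)=e^{isH}P_{ac}f$ and using \eqref{eq:smoothing} finishes the proof.

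\medskip

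\noindent\emph{Main obstacle.} The core difficulty is Step 1: obtaining the uniform $L^2(\mu)\to L^2(\mu)$ bound on $M R_0(\lambda+i0)M$ across the entire real axis. The high-energy half is essentially a restriction theorem for the Fourier transform of $\mu$, and it is there that the dimension threshold $\alpha>n-(1+\tfrac1{n-1})$ is forced. Steps 2 and 3 are then rather schematic once the free smoothing estimate and the spectral hypotheses are in hand.
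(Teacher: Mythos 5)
Your outline reproduces the paper's architecture faithfully: reduce both local decay estimates to uniform $L^2(\mu)\to L^2(\mu)$ bounds on $R_0^\pm(\lambda)\mu$ and $R_\mu^\pm(\lambda)\mu$ via $TT^*$ and Plancherel in time, obtain the perturbed bound from the free one via the resolvent identity and uniform invertibility of $I+R_0^+(\lambda)\mu$, and then deduce Strichartz from Duhamel together with Christ--Kiselev. Steps 2 and 3 agree with the paper both in spirit and in the formulas written down. However, the two places where the real work lives are sketched so lightly that they hide genuine gaps.

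In Step~1 you gesture at ``a Stein--Tomas/Agmon--Kato--Kuroda type restriction estimate relative to the fractal measure $\mu$'' and a kernel-side oscillatory estimate with weight $|x-y|^{-(n-1)/2}$. This is the right heuristic, but the classical Stein--Tomas machinery does not produce the stated threshold $\alpha>n-(1+\tfrac{1}{n-1})$. The paper's high-energy bound rests on a recent and specifically fractal restriction estimate of Du--Zhang (rescaled to spheres), which gives $\|\widehat{\mu f}\|_{L^2(RS^{n-1})}\lesssim R^{\frac{n-1}{2}-\frac{\alpha}{2}(1-\frac1n)}\|f\|_{L^2(\mu)}$; this is then fed into a Fourier-side decomposition of $R_0^\pm(\lambda^2)$ into its surface-measure part, its $|\xi|\not\sim\lambda$ part (handled by Sobolev embeddings into $L^2(\mu)$), and a principal-value integral over the spherical slices near $|\xi|=\lambda$. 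A kernel-side argument would need to recover the same gain, and you have not indicated how to extract the $\alpha$-dependence there; simply invoking the oscillation in the kernel does not by itself beat the threshold $\alpha>n-1$ that cruder methods yield.

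In Step~2 you state that ``the hypotheses on the absence of positive eigenvalues and of a zero resonance give pointwise invertibility,'' but this implication is not automatic. A null vector $\psi$ of $I+R_0^+(\lambda_0^2)\mu$ lives a priori only in $L^2(\mu)$; one must upgrade it to $\psi\in L^2(\R^n)$ before the eigenvalue hypothesis applies. The paper does this by noting that the duality pairing forces $\widehat{\mu\psi}$ to vanish on the sphere $|\xi|=\lambda_0$, then invoking a Helmholtz-regularity theorem of Goldberg (Theorem~2 of \cite{Go16}) to conclude $R_0^+(\lambda_0^2)\mu\psi\in L^2(\R^n)$. Without this step, or a substitute for it, the Fredholm alternative does not close. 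Your appeal to ``the low-energy analysis of Section~\ref{sec:LowEnergy}'' concedes this point rather than supplying the argument.

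Step~3 is correct as stated and matches the paper's Rodnianski--Schlag derivation.
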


The second author considered $L^1 \to L^\infty$ dispersive estimates in $\R^3$ (under the same set of assumptions when $n=3$, including $\alpha > \frac32$) in~\cite{Go12}.  Strichartz inequalities in this case follow as a direct consequence by~\cite{KT}.  The results presented here in $\R^n$, $n \geq 4$, are new and rely in part on recent advances in Fourier restriction problems such as \cite{LuRo19, DuZh19}.  In particular the improved decay of spherical Fourier means allows us to capture the physically relevant $\alpha = n-1$ case where the potential might be supported on a compact hypersurface in higher dimensions.  It is known for dimensions $n>3$ that $L^1\to L^\infty$ dispersive estimates need not hold even for compactly supported potentials $V$ if $V$ is not sufficiently differentiable, see \cite{GV,ErGr10}.  Whereas the smoothness is not required for Strichartz estimates to hold in higher dimensions.  Our argument works in dimensions $n\geq 3$, technical issues in dimension $n=2$ (for example with the use of $\dot{H}^1(\R^n))$ would require different methods.
 
$\alpha$-dimensional measures also satisfy a strong Kato-type property that for any $\gamma < \alpha$,
\begin{equation} \label{eq:KatoBound}
\sup_{y\in\R^n} \int_{|x-y| < r} \frac{|\mu|(dx)}{|x-y|^{\gamma}} \les 
C_{\mu} r^{\alpha-\gamma}.
\end{equation}
Furthermore, since $\mu$ has compact support, the integral over the entire 
space $x \in \R^n$ is bounded uniformly in $y$.  These integral bounds
will be proved as Lemma~\ref{prop:dimensionKato} below.  The choice
$\gamma = n-2$ is significant due to its connection with the Green's function 
of the Laplacian in $\R^n$ when $n\geq3$.

We also characterize potentials in terms of the {\em global Kato norm}, defined
on signed measures in $\R^n$ by the quantity
\begin{equation} \label{eq:KatoNorm}
\|\mu\|_{\Kato} = \sup_{y\in\R^n} \int_{\R^n} \frac{|\mu|(dx)}{|x-y|^{n-2}}
\end{equation}
One can see that every element with finite global Kato norm is a $(n-2)$-dimensional measure with
$C_\mu \leq \|\mu\|_{\Kato}$, by comparing $|x-y|^{2-n}$ to the characteristic
function of a ball.  The converse is false, however the Kato class contains
all compactly supported measures of dimension $\alpha > n-2$.  We examine
this relationship   in Lemma~\ref{prop:dimensionKato}.   We follow the naming convention in
Rodnianski-Schlag~\cite{RoSc04} where the global Kato norm is applied to
dispersive estimates in $\R^3$, as opposed to the local norms considered in Schechter~\cite{Sc71}.

There is a now well-known strategy to obtain the Strichartz estimates \eqref{eq:Strichartz}. One uses the space-time $L^2$ estimates \eqref{eq:freesmoothing} and \eqref{eq:smoothing} and the argument of Rodnianski-Schlag, \cite{RoSc04}.  There is a minor modification to the Rodnianski-Schlag framework in that instead of factorizing the operator corresponding to multiplication by $\mu$, we instead apply it directly as a bounded map from $L^2_tL^2(\mu)$ to its dual space.

The resolvent operators $(-\Delta-\lambda)^{-1}$ and $(-\Delta+\mu-\lambda)^{-1}$ are well defined for $\lambda$ in the resolvent sets.
We define the limiting resolvent operators
\begin{equation}
R_0^\pm(\lambda) := \lim\limits_{\eps\to 0^+} (-\Delta -(\lambda \pm i \eps))^{-1}
\quad \text{and} \quad
R_\mu^\pm(\lambda) := \lim\limits_{\eps\to 0^+} (-\Delta + \mu - (\lambda \pm i\eps))^{-1}
\end{equation}
Following Kato's derivation~\cite{Ka65}, $L^2$ estimates such as \eqref{eq:freesmoothing} and \eqref{eq:smoothing} 
are valid precisely if there are uniform bounds of the resolvent operators. 
We prove the following mapping bounds for the resolvents $R_0^\pm(\lambda)$ and $R_\mu^\pm(\lambda)$.
\begin{theorem} \label{thm:resolvents}
Under the hypotheses of Theorem~\ref{thm:main}
\begin{align}
\sup_{\lambda \geq 0} \|((R_0^+(\lambda) - R_0^-(\lambda))\mu\|_{L^2(\mu)\to L^2(\mu)} &< \infty 
\label{eq:UnifFreeRes}\\
\text{and} \quad \sup_{\lambda \geq 0} \|((R_\mu^+(\lambda) - R_\mu^-(\lambda))\mu\|_{L^2(\mu)\to L^2(\mu)} &< \infty. 
\label{eq:UnifPerturbedRes}
\end{align}
\end{theorem}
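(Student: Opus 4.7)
The plan is to prove \eqref{eq:UnifFreeRes} directly from a spectral representation of $R_0^+-R_0^-$ together with a Fourier restriction inequality for the fractal measure $\mu$, and then to deduce \eqref{eq:UnifPerturbedRes} from the second resolvent identity combined with uniform invertibility of $I+R_0^\pm(\lambda)\mu$ on $L^2(\mu)$.

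For \eqref{eq:UnifFreeRes}, the spectral formula
\begin{equation*}
\bigl(R_0^+(\lambda)-R_0^-(\lambda)\bigr)(x,y)=c_n\,\lambda^{(n-2)/2}\int_{S^{n-1}}e^{i\sqrt{\lambda}\,\omega\cdot(x-y)}\,d\omega
\end{equation*}
gives, upon pairing against $f\,d\mu$ and $\overline{g}\,d\mu$ and applying Cauchy--Schwarz, the reduction to the weighted $L^2$ restriction inequality
\begin{equation*}
\lambda^{(n-2)/2}\int_{S^{n-1}}\bigl|\widehat{f\mu}(\sqrt{\lambda}\,\omega)\bigr|^2\,d\omega\les\|f\|_{L^2(\mu)}^2\qquad\text{uniformly in }\lambda>0.
\end{equation*}
Setting $k=\sqrt{\lambda}$ and rescaling to a dilated measure $\mu_k$, whose dimension constant $C_{\mu_k}$ will be controlled by $C_\mu$, reduces the problem to a Stein--Tomas type restriction estimate at unit frequency for an $\alpha$-dimensional measure. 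The threshold $\alpha>n-(1+\tfrac{1}{n-1})$ is dictated precisely by this restriction inequality: it lies beyond the classical Mitsis--Mockenhaupt--Bak--Seeger range and is accessible only through the recent fractal-restriction advances of \cite{LuRo19,DuZh19}. At $\lambda=0$ the difference vanishes since $R_0^+(0)=R_0^-(0)$ for $n\geq 3$, and low positive frequencies are handled by the $\lambda^{(n-2)/2}$ prefactor together with the Kato bound \eqref{eq:KatoBound}.

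For \eqref{eq:UnifPerturbedRes}, the identity $R_\mu^\pm=R_0^\pm-R_0^\pm\mu\,R_\mu^\pm$ rearranges on $L^2(\mu)$ to $R_\mu^\pm\mu=I-(I+R_0^\pm\mu)^{-1}$, from which
\begin{equation*}
\bigl(R_\mu^+(\lambda)-R_\mu^-(\lambda)\bigr)\mu=\bigl(I+R_0^-(\lambda)\mu\bigr)^{-1}\bigl(R_0^+(\lambda)-R_0^-(\lambda)\bigr)\mu\bigl(I+R_0^+(\lambda)\mu\bigr)^{-1}
\end{equation*}
as operators on $L^2(\mu)$. Combined with \eqref{eq:UnifFreeRes}, it therefore suffices to show that $(I+R_0^\pm(\lambda)\mu)^{-1}$ is uniformly bounded for $\lambda\geq 0$. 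Using \eqref{eq:KatoBound} with $\gamma=n-2$, I expect to show that $R_0^\pm(\lambda)\mu$ is compact on $L^2(\mu)$ for each fixed $\lambda$, norm-continuous in $\lambda$ on $[0,\infty)$, and tends to zero in operator norm as $\lambda\to\infty$. The absence of positive eigenvalues and of a zero resonance rules out nontrivial kernel vectors for $I+R_0^\pm(\lambda)\mu$ at every $\lambda\geq 0$, so the Fredholm alternative yields pointwise invertibility. Continuity in $\lambda$ together with decay at infinity then upgrade this to a uniform bound.

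The principal obstacle is the free restriction estimate at the critical threshold. Classical $L^2$ restriction to the sphere does not see the geometry of a general $\alpha$-dimensional support, and the stated dimensional hypothesis $\alpha>n-(1+\tfrac{1}{n-1})$ emerges only from the recent refinements of \cite{DuZh19,LuRo19}; any weakening would be tantamount to a stronger restriction theorem. A secondary technical point lies in the transition near $\lambda=0$ for the perturbed problem: identifying the kernel of $I+R_0^\pm(0)\mu$ on $L^2(\mu)$ with the space of zero-energy resonances defined via $\dot H^1\setminus L^2$ in the introduction, so that the no-resonance hypothesis actually implies invertibility at $\lambda=0$, requires a careful low-energy analysis of the mapping properties of $R_0(0)$ between $L^2(\mu)$ and $\dot H^1(\R^n)$.
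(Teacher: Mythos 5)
Your reduction of \eqref{eq:UnifFreeRes} to a sphere-restriction estimate for $\widehat{f\mu}$ is correct, and the exponent bookkeeping does land exactly on the threshold $\alpha > n-(1+\tfrac{1}{n-1})$: this is a somewhat cleaner route than the paper's, which instead controls each one-sided resolvent $R_0^\pm(\lambda^2)\mu$ in full (including its principal-value part) and then subtracts. Since $R_0^+ - R_0^-$ is exactly a constant times the composition $\mathcal{R}^*_\lambda\mathcal{R}_\lambda$ of the restriction operator to the sphere of radius $\sqrt\lambda$, you avoid the principal-value analysis altogether for the difference. The second-resolvent-identity rearrangement
$R_\mu^\pm\mu = I - (I+R_0^\pm\mu)^{-1}$ and the resulting sandwich formula for $(R_\mu^+-R_\mu^-)\mu$ are also correct.

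The gap is in the high-energy half of the argument for \eqref{eq:UnifPerturbedRes}. You assert that $\|R_0^\pm(\lambda)\mu\|_{L^2(\mu)\to L^2(\mu)}\to 0$ as $\lambda\to\infty$ follows from the Kato bound \eqref{eq:KatoBound} with $\gamma=n-2$. It does not, and this is precisely the technical heart of the paper. In $n\geq 4$ the free resolvent kernel behaves like $\lambda^{\frac{n-3}{2}}|x-y|^{-\frac{n-1}{2}}$ on the scale $\lambda|x-y|\gtrsim 1$, so a kernel-by-kernel Kato estimate delivers a bound growing like $\lambda^{\frac{n-3}{2}}$, not decaying; even in $n=3$ it gives only a uniform, non-decaying bound. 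The paper's Theorem~\ref{thm:resolvent} obtains the needed decay $\la\lambda\ra^{-\eps}$ by splitting the multiplier $\frac{1}{|\xi|^2-\lambda^2}\pm i\frac{\pi}{\lambda}d\sigma_{|\xi|=\lambda}$ into the surface-measure term (controlled by the Du--Zhang restriction estimate, the same tool you invoked for \eqref{eq:UnifFreeRes}), a far-from-resonance multiplier treated via the embedding $H^{\frac{n-\gamma}{2}}\hookrightarrow L^2(\mu)$, and a near-resonance principal-value integral of sphere restrictions $F_s$ controlled by bounding both $F_s$ and $\partial_s F_s$ and integrating by parts. Without this decay you cannot conclude that $(I+R_0^\pm(\lambda)\mu)^{-1}$ is uniformly bounded as $\lambda\to\infty$: compactness, continuity, and pointwise invertibility on $[0,\infty)$ alone do not preclude the inverse norm from blowing up at infinity. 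So the restriction estimate must be brought to bear on the one-sided resolvents as well, not only on their difference, and that is a substantially more involved computation than the one you sketched.
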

Due to the different challenges of establishing these bounds when the spectral parameter is close to $\lambda=0$ (small energy) or bounded away from zero (large energy), we require different tools in each regime.  We bound the low energy contribution in Section~\ref{sec:LowEnergy} in Lemma~\ref{lem:lowenergybd}, while the large energy is controlled in Section~\ref{sec:HighEnergy}.  Once the resolvent bounds are established at all energies, we assemble the results to prove Theorem~\ref{thm:main}.

\section{Self-Adjointness and Compactness} \label{sec:selfadjoint}

For any perturbation $V(x)$ which is not a bounded function of $x$ there are well known
difficulties identifying the domain of $-\Delta + V$ and its adjoint operator.   The main goal of this section is to prove Proposition~\ref{prop:selfadj} below.
Along the way, we will prove some compactness results that will be
useful for describing the spectral measure of $-\Delta + \mu$.  

\begin{prop}\label{prop:selfadj}
	
	If $\mu$ is a compactly supported $\alpha$-dimensional signed measure for some $\alpha>n-2$, then there exists a unique self-adjoint extension of $-\Delta+\mu$. 
	
\end{prop}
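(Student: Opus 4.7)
The plan is to realize $-\Delta+\mu$ through its associated sesquilinear form and then invoke the KLMN theorem. Define
\[
\mathfrak{q}[f,g] := \int_{\R^n}\nabla\bar f\cdot\nabla g\,dx + \int_{\R^n}\bar f\,g\,d\mu
\]
on the form domain $H^1(\R^n)$. The task reduces to showing that the perturbation form is infinitesimally bounded with respect to the Dirichlet form: for every $\eps>0$ there exists $C_\eps<\infty$ with
\[
\int_{\R^n}|f|^2\,d|\mu| \;\leq\; \eps\|\nabla f\|_2^2 + C_\eps\|f\|_2^2, \qquad f\in H^1(\R^n).
\]
Given this bound, KLMN produces a closed, lower semibounded form, and the first representation theorem delivers a unique associated self-adjoint operator with form domain $H^1(\R^n)$, which is the desired self-adjoint realization of $-\Delta+\mu$.

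The relative form bound I propose to obtain by a symmetric Schur test on the Yukawa Green's function $G_\lambda(x,y)$ of $-\Delta+\lambda$, which for $n\geq 3$ satisfies $G_\lambda(x,y)\lesssim |x-y|^{2-n}e^{-c\sqrt\lambda|x-y|}$. Applying \eqref{eq:KatoBound} with $\gamma=n-2$ — admissible since $\alpha>n-2$, a property to be recorded in Lemma~\ref{prop:dimensionKato} — and decomposing the integral into dyadic annuli around the scale $|x-y|\sim\lambda^{-1/2}$, one obtains
\[
\eta(\lambda) := \sup_{y\in\R^n}\int_{\R^n} G_\lambda(x,y)\,d|\mu|(x) \;\lesssim\; \lambda^{-(\alpha-n+2)/2},
\]
so $\eta(\lambda)\to 0$ as $\lambda\to\infty$. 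By Schur, the integral operator with kernel $G_\lambda$ is bounded on $L^2(|\mu|)$ with norm $\leq\eta(\lambda)$; equivalently the map $(-\Delta+\lambda)^{-1/2}:L^2(dx)\to L^2(|\mu|)$ has norm $\leq\eta(\lambda)^{1/2}$. Writing $h=(-\Delta+\lambda)^{1/2}f$, so that $\|h\|_2^2=\|\nabla f\|_2^2+\lambda\|f\|_2^2$, I then conclude
\[
\int|f|^2\,d|\mu| \;=\; \bigl\|(-\Delta+\lambda)^{-1/2}h\bigr\|_{L^2(|\mu|)}^{2} \;\leq\; \eta(\lambda)\bigl(\|\nabla f\|_2^2+\lambda\|f\|_2^2\bigr),
\]
and choosing $\lambda$ large produces the required infinitesimal bound.

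The main obstacle is precisely this resolvent estimate, and it is here that the hypothesis $\alpha>n-2$ is tight: with any lower dimensional exponent the near-diagonal integral $\sup_y\int_{|x-y|<1}|x-y|^{2-n}\,d|\mu|(x)$ would already diverge and defeat the Schur test. Uniqueness of the self-adjoint extension is then automatic from KLMN, since the constructed operator is characterized as the unique self-adjoint operator whose quadratic form equals $\mathfrak{q}$ on $H^1(\R^n)$. As an added bonus for the remainder of the section, the same estimate promotes boundedness to relative compactness: the kernel $G_\lambda(x,y)$, restricted to $\mathrm{supp}(\mu)\times\mathrm{supp}(\mu)$, can be approximated in $L^2(|\mu|\otimes|\mu|)$ by finite-rank truncations away from the diagonal, while the diagonal part has small operator norm by the Schur bound above, yielding the compactness assertions flagged in the introduction to this section.
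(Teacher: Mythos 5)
Your proof is correct and reaches the same conclusion by the same top-level device (KLMN), but it establishes the key relative form bound by a genuinely different argument than the paper. The paper first proves a modulus-of-continuity estimate (Lemma~\ref{lem:HolderKato}): for $\vphi\in\dot H^1(\R^n)$, $\|(j-\tau_z)\vphi\|_{L^2(\mu)}\lesssim |z|^\beta\|\vphi\|_{\dot H^1}$, and then obtains the form bound~\eqref{eq:formbound} by comparing $\mu$ with its mollification $\mu*\eta_r$ and letting $r\to 0$. You instead run a symmetric Schur test on the Yukawa kernel $G_\lambda(x,y)\lesssim |x-y|^{2-n}e^{-c\sqrt\lambda|x-y|}$, obtaining $\eta(\lambda)\lesssim\lambda^{-(\alpha-n+2)/2}$ from the Kato-type bound~\eqref{eq:KatoBound} (with $\gamma=n-2$, taking $r\sim\lambda^{-1/2}$, and summing dyadic annuli outside that scale), and convert this into the form bound via the identity $\int|f|^2\,d|\mu|=\|(-\Delta+\lambda)^{-1/2}h\|_{L^2(|\mu|)}^2$ with $h=(-\Delta+\lambda)^{1/2}f$; sending $\lambda\to\infty$ gives the infinitesimal bound. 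Your route is self-contained and more elementary—it uses only a standard estimate on the massive free Green's function and sidesteps the translation-continuity lemma entirely. What you give up is that Lemma~\ref{lem:HolderKato} is reused in the paper to prove compactness of the embedding $\dot H^1\hookrightarrow L^2(\mu)$ (Lemma~\ref{lem:compact}); your closing remark sketches a correct replacement (truncate $G_\lambda$ away from the diagonal: the off-diagonal piece is bounded on $\mathrm{supp}\,\mu\times\mathrm{supp}\,\mu$ and hence Hilbert--Schmidt on $L^2(|\mu|)$, while the near-diagonal piece has small operator norm by the Schur bound), but that sketch would need to be written out in full detail if it is to stand in for Lemma~\ref{lem:compact} in the remainder of the section.
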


The first step is to check that $\mu$ satisfies both a local and global
``Kato condition."
\begin{lemma} \label{prop:dimensionKato}
Suppose $\mu$ is an $\alpha$-dimensional signed measure with support
in the ball $B(0, M)$, and $\gamma$ is such that $\alpha > \gamma > 0$.  Then $\mu$ satisfies the estimates
\begin{equation} \label{eq:dimensionKato}
\begin{aligned}
	\sup_{y\in\R^n}\int_{|x-y|<r} \frac{|\mu|(dx)}{|x-y|^{\gamma}}
	&\les C_\mu  r^{\alpha-\gamma} \quad \textrm{\it for all}\ r>0 \\
	\textrm{and}\quad  \sup_{y \in \R^n}\int_{\R^n} \frac{|\mu|(dx)}{|x-y|^{\gamma}} &\les C_\mu M^{\alpha - \gamma}.
\end{aligned}
\end{equation}
\end{lemma}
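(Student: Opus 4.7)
The plan is to reduce both bounds to a standard dyadic decomposition argument based on the density condition \eqref{eq:dimension}.

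For the local bound, I would partition the ball $\{|x-y| < r\}$ into the dyadic annuli $A_k := \{2^{-k-1}r \leq |x-y| < 2^{-k}r\}$ for $k \geq 0$. On $A_k$ the integrand satisfies $|x-y|^{-\gamma} \leq (2^{-k-1}r)^{-\gamma}$, while $|\mu|(A_k) \leq |\mu|(B(y,2^{-k}r)) \leq C_\mu (2^{-k}r)^\alpha$ by hypothesis. Summing,
\begin{equation*}
\int_{|x-y|<r} \frac{|\mu|(dx)}{|x-y|^{\gamma}} \leq \sum_{k=0}^{\infty} (2^{-k-1}r)^{-\gamma}\, C_\mu (2^{-k}r)^{\alpha} = C_\mu\, 2^{\gamma} r^{\alpha-\gamma} \sum_{k=0}^{\infty} 2^{-k(\alpha-\gamma)},
\end{equation*}
and the geometric series converges because $\alpha > \gamma$, yielding the asserted $C_\mu r^{\alpha-\gamma}$ bound uniformly in $y$.

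For the global bound I would split into two cases depending on how far $y$ lies from $\operatorname{supp}(\mu) \subset B(0,M)$. When $|y| \leq 2M$, the entire support of $\mu$ is contained in $B(y,3M)$, so the local bound applied with $r=3M$ immediately gives $C_\mu (3M)^{\alpha-\gamma} \lesssim C_\mu M^{\alpha-\gamma}$. When $|y| > 2M$, every $x \in \operatorname{supp}(\mu)$ satisfies $|x-y| \geq |y|-M \geq |y|/2 \geq M$, and the total variation $|\mu|(\R^n) \leq C_\mu M^\alpha$ (take $y=0$, $r=M$ in \eqref{eq:dimension}). Hence
\begin{equation*}
\int_{\R^n} \frac{|\mu|(dx)}{|x-y|^{\gamma}} \leq (|y|/2)^{-\gamma} C_\mu M^\alpha \leq 2^{\gamma} M^{-\gamma} C_\mu M^\alpha = 2^{\gamma} C_\mu M^{\alpha-\gamma}.
\end{equation*}

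No step looks like a genuine obstacle; the only subtlety is handling the far-field case $|y| \gg M$ in the global estimate so that the $C_\mu M^{\alpha-\gamma}$ constant is confirmed to be uniform in $y$, but splitting into the two regions above is precisely what makes this transparent.
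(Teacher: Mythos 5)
Your proof is correct and essentially follows the paper's approach: the paper handles the local bound via a Stieltjes integration by parts giving $\gamma\int_0^r |\mu|(B(y,t))\,t^{-\gamma-1}\,dt + r^{-\gamma}|\mu|(B(y,r))$, which is just the continuous version of your dyadic annulus sum, and both feed the density condition $|\mu|(B(y,t))\les C_\mu t^\alpha$ into the resulting geometric tail. The global bound argument (split at $|y|\le 2M$ versus $|y|>2M$, setting $r=3M$ in the first case and using $|x-y|\sim|y|\gtrsim M$ in the second) matches the paper's verbatim.
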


\begin{proof}
For each point $y \in \R^n$, 
\begin{equation*}
\begin{aligned} \int_{|x-y| < r} \frac{|\mu|(dx)}{|x-y|^\gamma}
&= \gamma \int_0^r \frac{|\mu|(B(y, t))}{t^{\gamma+1}} \,dt + r^{-\gamma} |\mu|(B(y,r)) \\
&\les C_\mu \Big( \int_0^r t^{\alpha - \gamma - 1} \, dt + r^{\alpha -\gamma}\Big) =
C_\mu\Big(1 + \frac{1}{\alpha-\gamma}\Big)r^{\alpha-\gamma}.
\end{aligned}
\end{equation*}
This establishes the first claim.  For the second claim, if $|y| < 2M$,  the global bound is achieved by setting $r = 3M$. 
If $|y| > 2M$ then the integral in~\eqref{eq:dimensionKato} is easily
bounded by $|y|^{-\gamma}|\mu|(B(0, M))$ by observing that $|x-y| \sim |y| \gtrsim M$
within the  support of $\mu$.

\end{proof}

By choosing $\gamma = n-2$, it follows that $\|\mu\|_\Kato < \infty$.

\begin{lemma} \label{prop:H1embedding}
If $\mu$ is a compactly supported $\alpha$-dimensional measure for some $\alpha > n-2$,
then $\dot{H}^1(\R^n) \subset L^2(\mu)$.
\end{lemma}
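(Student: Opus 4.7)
The plan is to reduce the embedding to a boundedness statement for the Riesz potential operator acting between $L^2(\R^n)$ and $L^2(|\mu|)$. The starting point is the standard pointwise representation: for $f \in C_c^\infty(\R^n)$, we have $f = c_n (-\Delta)^{-1}(-\Delta f)$, and integrating by parts against the Newton kernel yields
\[
 f(x) = c_n \int_{\R^n} \frac{(x-y)\cdot \nabla f(y)}{|x-y|^n}\,dy, \qquad \text{so}\qquad |f(x)| \les (I_1|\nabla f|)(x),
\]
where $I_1$ denotes the Riesz potential with kernel $|x-y|^{-(n-1)}$. Thus the embedding will follow if the operator $T g := I_1 g$, restricted to act on the support of $\mu$, is bounded from $L^2(\R^n)$ into $L^2(|\mu|)$; we then pass from $C_c^\infty$ to all of $\dot H^1(\R^n)$ by density.

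To establish the boundedness of $T$, I would run a $TT^*$ argument on $L^2(|\mu|)$. Its adjoint is $(T^* h)(y) = \int |x-y|^{-(n-1)}h(x)\,d|\mu|(x)$, and using the convolution identity $I_1 \ast I_1 = c\, I_2$ in $\R^n$ (i.e., $\int |x_1-y|^{-(n-1)}|x_2-y|^{-(n-1)}\,dy = c\,|x_1-x_2|^{-(n-2)}$) one computes that $TT^*$ is the integral operator on $L^2(|\mu|)$ with kernel
\[
 K(x_1,x_2) \,=\, \frac{c}{|x_1-x_2|^{n-2}}.
\]

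The crucial step is Schur's test: since $K$ is symmetric in $x_1,x_2$, boundedness of $TT^*$ on $L^2(|\mu|)$ reduces to the single uniform estimate
\[
 \sup_{x \in \R^n}\int_{\R^n} \frac{d|\mu|(y)}{|x-y|^{n-2}} < \infty.
\]
This is exactly the global bound in Lemma~\ref{prop:dimensionKato} with $\gamma = n-2$, which is available because the hypothesis $\alpha > n-2$ ensures $\alpha > \gamma$. Combining, $T$ is bounded, and the pointwise inequality above gives $\|f\|_{L^2(\mu)} \les \|I_1|\nabla f|\|_{L^2(|\mu|)} \les \|\nabla f\|_{L^2(\R^n)}$, establishing $\dot H^1(\R^n) \subset L^2(\mu)$.

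I do not expect a serious obstacle; the mild subtlety is the initial definition of $f$ pointwise $|\mu|$-a.e.\ for an arbitrary element of $\dot H^1(\R^n)$ (which contains only equivalence classes of functions modulo constants). This is resolved by first proving the inequality on $C_c^\infty(\R^n)$, noting that the Riesz-potential bound controls the $L^2(|\mu|)$ norm, and then using density of $C_c^\infty$ in $\dot H^1$ to extend the embedding.
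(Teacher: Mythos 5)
Your proof is correct and is essentially the same as the paper's: both arguments run a $TT^*$ reduction that lands on the operator with Newton kernel $|x-y|^{-(n-2)}$ acting on $L^2(\mu)$, and both control that operator by the global Kato bound of Lemma~\ref{prop:dimensionKato} with $\gamma = n-2$. The only cosmetic differences are that you invoke Schur's test where the paper interpolates between the trivial $L^1(\mu)$ and $L^\infty(\mu)$ bounds, and you enter via the pointwise Riesz-potential estimate $|f|\lesssim I_1|\nabla f|$ where the paper uses the isometry $(-\Delta)^{-1/2}:L^2(\R^n)\to\dot H^1(\R^n)$; these are equivalent formulations of the same idea.
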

\begin{proof}
Two mapping bounds follow directly from the definition of the global Kato norm, using that the integral kernel of $(-\Delta)^{-1}(x,y)$ is a scalar multiple of $|x-y|^{2-n}$, 
\begin{align*}
\|(-\Delta)^{-1}\mu f\|_{L^\infty(\mu)} &\les \|\mu\|_\Kato \|f\|_{L^\infty(\mu)}, \\
\|(-\Delta)^{-1}\mu f\|_{L^1(\mu)} &\les \|\mu\|_\Kato \|f\|_{L^1(\mu)}. 
\end{align*}
Interpolation between these two endpoints yields
\begin{equation}\label{eq:L2Vembed}
\|(-\Delta)^{-1}\mu f\|_{L^2(\mu)} \les \|\mu\|_\Kato \|f\|_{L^2(\mu)}, 
\end{equation}
This, along with a $TT^*$ argument show that the square root $(-\Delta)^{-\frac12}$ is a bounded operator from $L^2(\R^n)$
to $L^2(\mu)$, by duality it is also bounded from $L^2(\mu)^*$ to $L^2(\R^n)$.  At the same time $(-\Delta)^{-\frac12}$ is an isometry from $L^2(\R^n)$ 
onto $\dot{H}^1(\R^n)$.  This suffices to prove the desired inclusion.   Further, \eqref{eq:L2Vembed} shows that 
\begin{equation*}
\|(-\Delta)^{-1}g\|_{L^2(\mu)} \les \|\mu\|_\Kato \|g\|_{L^2(\mu)^*}. 
\end{equation*}
\end{proof}

Given a point $z \in \R^n$, define the translation operator $\tau_z f(x) := f(x-z)$.
Translation operators are not bounded on $L^2(\mu)$ in general,
but they behave quite well when restricted to the subspace $\dot{H}^1$.
Let $j: \dot{H}^1(\R^n) \to L^2(\mu)$ be the natural inclusion operator.

\begin{lemma} \label{lem:HolderKato}
If $\mu$ is a compactly supported $\alpha$-dimensional measure for some $\alpha > n-2$,
\begin{equation}
\|(j - \tau_z)\vphi\|_{L^2(\mu)} \les \sqrt{C_\mu} |z|^{\beta} \|\vphi\|_{\dot{H}^1}
\end{equation}
for any $0< \beta < \frac{\alpha- (n-2)}{2}$ and $|z| < 1$. 
\end{lemma}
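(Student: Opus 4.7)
The plan is to mirror the $TT^*$ strategy from the proof of Lemma~\ref{prop:H1embedding}. Any $\varphi \in \dot{H}^1(\R^n)$ can be written as $\varphi = (-\Delta)^{-1/2} g$ for some $g \in L^2(\R^n)$ with $\|g\|_2 = \|\varphi\|_{\dot{H}^1}$, and $(-\Delta)^{-1/2}$ has integral kernel proportional to $|x-y|^{-(n-1)}$. Define
$$
T_z : L^2(\R^n) \to L^2(\mu), \qquad T_z g := (j - \tau_z)(-\Delta)^{-1/2} g,
$$
so that the claim reduces to proving $\|T_z\|^2_{L^2(\R^n) \to L^2(\mu)} \les C_\mu |z|^{\alpha-(n-2)}$, which by the $TT^*$ identity is equivalent to bounding $\|T_z T_z^*\|$ on $L^2(\mu)$.

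A direct calculation, using that convolution of the two Riesz kernels $c_n|\cdot|^{-(n-1)}$ reproduces the Newtonian potential $c'_n |\cdot|^{-(n-2)}$ of $(-\Delta)^{-1}$, identifies the kernel of $T_z T_z^*$ (with respect to $d|\mu|$) as
$$
K(x,y) = c \bigl[\, 2|x-y|^{-(n-2)} - |x-y+z|^{-(n-2)} - |x-y-z|^{-(n-2)} \,\bigr],
$$
namely the symmetric second $z$-difference of the Newtonian potential. Because $K$ is symmetric in $(x,y)$, Schur's test reduces the task to establishing
$$
\sup_{x \in \R^n}\int_{\R^n} |K(x,y)|\, d|\mu|(y) \les C_\mu |z|^{\alpha-(n-2)}.
$$

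To prove this I would split the domain at $|x-y| = 2|z|$. On the far region $\{|x-y|\ge 2|z|\}$ the three pointwise singularities of $K$ are well separated from $y$, so a Taylor expansion of $u \mapsto |u|^{-(n-2)}$ gives the pointwise bound $|K(x,y)| \les |z|^2|x-y|^{-n}$; a dyadic decomposition into annuli together with the $\alpha$-dimensional estimate~\eqref{eq:dimension} controls the integral by a geometric series with ratio $2^{\alpha-n}$, convergent because $\alpha<n$, yielding $\les C_\mu|z|^{\alpha-n}$, which becomes $C_\mu |z|^{\alpha-(n-2)}$ after multiplying by $|z|^2$. On the near region $\{|x-y|<2|z|\}$, each of the three singular terms is estimated directly by Lemma~\ref{prop:dimensionKato} with $\gamma = n-2 < \alpha$, since each singularity lies within $O(|z|)$ of the integration region, and this again contributes $C_\mu|z|^{\alpha-(n-2)}$. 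Taking square roots then gives $\|T_z\| \les \sqrt{C_\mu}\,|z|^{(\alpha-(n-2))/2}$, which is stronger than the stated bound for every $\beta < \tfrac{\alpha-(n-2)}{2}$. The main delicacy is the bookkeeping of the second-difference kernel $K$ near its three singular points; the assumption $|z|<1$ is used only to keep the dyadic tails tame once one also accounts for the compact support of $\mu$.
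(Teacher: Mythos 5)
Your argument is correct and follows the paper's core strategy: both reduce the claim by a $TT^*$ argument to an operator on $L^2(\mu)$ whose kernel is the symmetric second $z$-difference of the Newtonian potential, both split the kernel estimate into a near region and a far region, both invoke Lemma~\ref{prop:dimensionKato} near the singularities and a Taylor bound $|z|^2|x-y|^{-n}$ in the far region, and both close via a scalar test (you use Schur, the paper interpolates $L^1(\mu)$ with $L^\infty(\mu)$ -- these are interchangeable here). The genuine technical difference is in the choice of splitting radius and the treatment of the far zone. The paper splits at $|x-y| = 2|z|^{2\beta/(\alpha-n+2)}$, a $\beta$-dependent scale, and handles the far region by an algebraic rewriting of $|x-y|^{-n}$ so that the two factors land exactly on $|z|^{2\beta-2}$ and a Kato-type integral; that setup delivers precisely the stated $|z|^{2\beta}$ and, incidentally, tacitly assumes $\beta$ is not too small for both exponents in the rewrite to be positive. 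You instead split at the natural scale $|x-y| = 2|z|$ and sum a dyadic geometric series with ratio $2^{\alpha-n}<1$, obtaining the sharper endpoint rate $|z|^{\alpha-(n-2)}$ for the kernel integral, hence $\|T_z\|\les\sqrt{C_\mu}\,|z|^{(\alpha-(n-2))/2}$; since $|z|<1$, this implies the stated estimate for every $\beta < \tfrac{\alpha-(n-2)}{2}$. Your version is a little cleaner and avoids the exponent bookkeeping, at the minor cost of having to sum a dyadic tail; your parenthetical remark attributing the role of $|z|<1$ to taming that tail is slightly off (the tail converges because $\alpha<n$ regardless), since $|z|<1$ is actually used to pass from the endpoint exponent to the general $\beta$, but this does not affect the proof.
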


\begin{proof}
By a $TT^*$ argument, it suffices to show that
\begin{equation*}
	\|(j-\tau_z)(-\Delta)^{-1}(j^* - \tau_{-z})g\|_{L^2(\mu)} \les C_\mu |z|^{2\beta}\|g\|_{L^2(\mu)^*},
\end{equation*}
where $j^*$ is the inclusion of $L^2(\mu)^*$ into $\dot{H}^{-1}$.  Translations commute
with powers of the Laplacian, so there is another equivalent statement
\begin{equation*}
\|(2 - \tau_z - \tau_{-z})(-\Delta)^{-1}\mu f\|_{L^2(\mu)} \les C_\mu |z|^{2\beta} \|f\|_{L^2(\mu)}.
\end{equation*}
Here we use that $j$ and $j^*$ may be replaced with the operators $\tau_0$ or $\mathbf 1$ on their respective domains.
We now show that
\begin{equation} \label{eq:HolderKato}
\Big| \int_{\R^n} \Big(\frac{2}{|x-y|^{n-2}} - \frac{1}{|x - (y-z)|^{n-2}} - \frac{1}{|x - (y+z)|^{n-2}}\Big) \mu(dx) \Big|
\les C_\mu |z|^{2\beta}.
\end{equation}
Indeed, Lemma~\ref{prop:dimensionKato} immediately proves this bound for the integral
over the region where $|x-y| \leq 2|z|^{2\beta/(\alpha - n + 2)}$.  Since the exponent $2\beta/(\alpha - n + 2)$
is strictly less than 1, the region includes all three singularities at $x \in \{ y, y-z, y+z\}$.

Outside of the region, Taylor's remainder theorem controls the integrand by a multiple of $\frac{|z|^2}{|x-y|^n}$.  We write
$$
	\frac{1}{|x-y|^n}=\frac{1}{|x-y|^{\f{\alpha-n+2}{\beta}+n-\alpha-2}} \frac{1}{|x-y|^{\alpha+2-\f{\alpha-n+2}{\beta}}},
$$
and note that under the hypotheses, both exponents are positive.
On the region of interest, the first term is dominated by $|z|^{2\beta-2}$.  
The estimate continues with
\begin{align*}
\int_{|x-y| > |z|^{\frac{2\beta}{\alpha - n +2}}} \frac{|z|^2}{|x-y|^n}|\mu|(dx) &\les 
\int_{\R^n} \frac{|z|^2}{|z|^{2-2\beta} |x-y|^{\alpha + 2 - \frac{a - n + 2}{\beta}}} |\mu|(dx) \\
&= \int_{\R^n} \frac{|z|^{2\beta}}{|x-y|^{\alpha + 2 - \frac{\alpha - n+2}{\beta}}} |\mu|(dx) \les C_\mu |z|^{2\beta}.
\end{align*}
The last inequality follows from Lemma~\ref{prop:dimensionKato} since $\frac{\alpha - n + 2}{\beta} > 2$, making the
exponent in the denominator less than $\alpha$.

The bound in~\eqref{eq:HolderKato} implies that
\begin{align*}
\|(2-\tau_z - \tau_{-z})(-\Delta)^{-1}\mu f\|_{L^\infty(\mu)} &\les C_\mu|z|^{2\beta}\|f\|_{L^\infty(\mu)}, \\
\|(2-\tau_z - \tau_{-z})(-\Delta)^{-1}\mu f\|_{L^1(\mu)} &\les C_\mu |z|^{2\beta}\|f\|_{L^1(\mu)}.
\end{align*}
The desired $L^2(\mu)$ bound follows by interpolation.
\end{proof}

\begin{lemma} \label{lem:compact}
The embedding $j:\dot{H}^1(\R^n) \to L^2(\mu)$ is compact.
\end{lemma}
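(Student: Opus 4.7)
The plan is to realize $j$ as the operator-norm limit of a family of compact operators obtained by mollifying in the physical variable. Fix a nonnegative $\rho \in C_c^\infty(\R^n)$ with $\int \rho = 1$ supported in the unit ball, set $\rho_\eps(x) := \eps^{-n}\rho(x/\eps)$, and define $T_\eps \colon \dot{H}^1(\R^n) \to L^2(\mu)$ by $T_\eps \vphi := j(\vphi * \rho_\eps)$. Convolution with $\rho_\eps$ preserves $\dot{H}^1$ (Young's inequality applied to $\nabla \vphi$), so $T_\eps$ is well defined and bounded. Because the compact operators form a norm-closed subspace of $\mathcal{B}(\dot{H}^1, L^2(\mu))$, it will suffice to verify (i) $T_\eps \to j$ in operator norm as $\eps \to 0^+$, and (ii) each $T_\eps$ is compact.

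For (i), writing $(T_\eps - j)\vphi = \int \rho_\eps(z)(\tau_z \vphi - j\vphi)\,dz$, Minkowski's inequality together with Lemma~\ref{lem:HolderKato} gives, for $\eps < 1$ and any admissible exponent $0 < \beta < \tfrac{\alpha - (n-2)}{2}$,
\begin{equation*}
\|(T_\eps - j)\vphi\|_{L^2(\mu)} \les \Big(\int_{|z|<\eps} \rho_\eps(z)\,|z|^\beta\,dz\Big)\|\vphi\|_{\dot{H}^1} \les \eps^\beta \|\vphi\|_{\dot{H}^1}.
\end{equation*}

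For (ii), let $\{\vphi_n\}$ be bounded in $\dot{H}^1$. Since $n \geq 3$, the Sobolev embedding provides a uniform bound on $\|\vphi_n\|_{L^{2^*}}$ with $2^* = \tfrac{2n}{n-2}$. Differentiating under the convolution, for every multi-index $\gamma$ one has
\begin{equation*}
\|D^\gamma(\vphi_n * \rho_\eps)\|_{L^\infty(\R^n)} \leq \|\vphi_n\|_{L^{2^*}(\R^n)}\,\|D^\gamma \rho_\eps\|_{L^{(2^*)'}(\R^n)},
\end{equation*}
bounded uniformly in $n$. Consequently $\{\vphi_n * \rho_\eps\}$ is uniformly $C^1$-bounded on $\R^n$; fixing a ball $K$ containing $\mathrm{supp}(\mu)$, the Arzel\`a-Ascoli theorem extracts a subsequence converging uniformly on $K$. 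Since $|\mu|(K) < \infty$, uniform convergence on $K$ upgrades automatically to convergence in $L^\infty(\mu)$ and hence in $L^2(\mu)$, which establishes compactness of $T_\eps$.

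Combining (i) and (ii) yields the claim. The real content of the argument is Lemma~\ref{lem:HolderKato}, which supplies precisely the H\"older continuity of translations on $\dot{H}^1 \to L^2(\mu)$ that is needed to execute the standard mollification-plus-Arzel\`a-Ascoli strategy in the present setting. Without such continuity the approximation in step (i) would fail, because translations are not in general bounded as operators on $L^2(\mu)$; all other ingredients are routine.
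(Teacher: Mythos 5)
Your argument is correct and follows essentially the same strategy as the paper: mollify, use Lemma~\ref{lem:HolderKato} (via Minkowski's inequality) to show the mollified operators converge to $j$ in operator norm, and then observe that each mollified operator is compact. The only variation is in the last step, where the paper cuts off with $\chi \equiv 1$ on $\mathrm{supp}(\mu)$ and invokes compactness of $\vphi \mapsto (\eta_r * \vphi)\chi$ on $\dot{H}^1$, while you give a direct Arzel\`a--Ascoli argument via the Sobolev embedding $\dot{H}^1 \hookrightarrow L^{2^*}$; both are routine and equivalent in spirit.
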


\begin{proof}
Let $\eta_r$ be a standard mollifier supported in a ball of radius $0 < r <1$.
Lemma~\ref{lem:HolderKato} implies that
\begin{equation*}
\|\vphi - (\eta_r * \vphi) \|_{L^2(\mu)} \les r^{\beta}\|\vphi\|_{\dot{H}^1}.
\end{equation*}
Furthermore, if $\chi$ is any smooth cutoff that is identically 1 on the support of $\mu$,
then multiplication by $\chi$ has no effect in $L^2(\mu)$.  Thus
\begin{equation*}
\|\vphi - (\eta_r * \vphi)\chi \|_{L^2(\mu)} \les r^{\beta}\|\vphi\|_{\dot{H}^1}.
\end{equation*}
The operators mapping $\vphi$ to $(\eta_r * \vphi)\chi$ is compact on $\dot{H}^1(\R^n)$,
so it is also compact from $\dot{H}^1$ to $L^2(\mu)$.  We have just shown that they
converge to the inclusion map $j$ as $r \to 0$.
\end{proof}

\begin{corollary} \label{cor:compact}
For any fixed $\lambda \geq 0$, the operator $R_0^+(\lambda^2)\mu$ is compact on $L^2(\mu)$.
\end{corollary}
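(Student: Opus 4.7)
The plan is to realize $R_0^+(\lambda^2)\mu$ as an integral operator on $L^2(\mu)$ with kernel $G_\lambda^+(x,y):=R_0^+(\lambda^2)(x,y)$, acting by $f\mapsto \int G_\lambda^+(\cdot,y)f(y)\,d\mu(y)$, and to approximate it in operator norm by compact operators via a singular/regular splitting of the kernel. The first step records a uniform pointwise bound. For $\lambda=0$ one has $G_0^+(x,y)=c_n|x-y|^{2-n}$ exactly, while for $\lambda>0$ the kernel is given by the explicit Hankel-function formula whose singularity at $x=y$ is again of order $|x-y|^{2-n}$. Since $\mathrm{supp}(\mu)\subset B(0,M)$, the Hankel argument $\lambda|x-y|$ stays in $[0,2\lambda M]$ and standard small/moderate-argument asymptotics yield $|G_\lambda^+(x,y)|\les_{\lambda,M}|x-y|^{2-n}$ uniformly on the support of $\mu$.

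Next, for each $\eps>0$ I would split $G_\lambda^+=G_\lambda^+\indicator_{|x-y|\geq\eps}+G_\lambda^+\indicator_{|x-y|<\eps}$ and denote the corresponding operators on $L^2(\mu)$ by $T_\eps$ and $E_\eps$. For the ``far'' piece $T_\eps$, the pointwise bound combined with $|x-y|^{2(2-n)}\indicator_{|x-y|\geq\eps}\leq \eps^{2-n}|x-y|^{2-n}$ gives
\[
\int\!\!\int|G_\lambda^+(x,y)|^2\indicator_{|x-y|\geq\eps}\,d|\mu|(x)\,d|\mu|(y) \;\les\; \eps^{2-n}\,|\mu|(\R^n)\,\sup_y\int|x-y|^{2-n}\,d|\mu|(x),
\]
which is finite by Lemma~\ref{prop:dimensionKato} applied with $\gamma=n-2<\alpha$. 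Thus $T_\eps$ is Hilbert--Schmidt on $L^2(\mu)$, and in particular compact.

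For the ``near'' piece $E_\eps$, Schur's test with $d|\mu|$ and the local estimate in Lemma~\ref{prop:dimensionKato} produce
\[
\|E_\eps\|_{L^2(\mu)\to L^2(\mu)} \;\les\; \sup_y\int_{|x-y|<\eps}|x-y|^{2-n}\,d|\mu|(x) \;\les\; \eps^{\alpha-(n-2)},
\]
which tends to zero as $\eps\to 0^+$ because $\alpha>n-2$. Combining the two pieces exhibits $R_0^+(\lambda^2)\mu$ as a norm limit of compact operators on $L^2(\mu)$, hence itself compact. The only substantive technical ingredient is the uniform pointwise kernel bound, which is classical for bounded Hankel argument; everything else is driven by the strict inequality $\alpha>n-2$, which makes both the Hilbert--Schmidt double integral and the Schur supremum converge. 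An alternative for the single case $\lambda=0$ is to factor $(-\Delta)^{-1}\mu\colon L^2(\mu)\to L^2(\mu)^{\ast}\hookrightarrow\dot H^{-1}\xrightarrow{(-\Delta)^{-1}}\dot H^{1}\xrightarrow{j}L^2(\mu)$ and invoke Lemma~\ref{lem:compact}, but this does not extend cleanly to $\lambda>0$, which is why the Schur/Hilbert--Schmidt route is preferable.
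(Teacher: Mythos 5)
Your argument is correct and takes a genuinely different route from the paper's. The paper factors $R_0^+(\lambda^2)\mu = j\,(\chi R_0^+(\lambda^2)\chi)\,j^*\mu$, using that $\chi R_0^+(\lambda^2)\chi$ is bounded from $\dot H^{-1}$ to $\dot H^1$ (via the Agmon--H\"ormander weighted resolvent estimate and the compact support of $\mu$) and that the embedding $j\colon\dot H^1\to L^2(\mu)$ is compact (Lemma~\ref{lem:compact}). You instead work directly with the resolvent kernel: the pointwise bound $|G_\lambda^+(x,y)|\lesssim |x-y|^{2-n}$ (with a $\lambda,M$-dependent constant) on $\mathrm{supp}\,\mu$, the Hilbert--Schmidt criterion for the truncated kernel on $\{|x-y|\ge\eps\}$, and Schur's test with Lemma~\ref{prop:dimensionKato} to send the near-diagonal error to zero. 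Both routes ultimately rest on the same strict inequality $\alpha>n-2$, and both are correct. Your kernel splitting is more elementary and self-contained (nothing is needed beyond the Kato-type bounds and Hankel asymptotics for bounded argument), but the paper's abstract factorization is reused essentially verbatim, with a $\lambda$-derivative inserted, to obtain differentiability of $\lambda\mapsto R_0^+(\lambda^2)\mu$ in Remark~\ref{rmk:continuity}, which is then invoked in the proof of Theorem~\ref{thm:resolvents}; extracting that continuity from the kernel decomposition would require an additional argument.
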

\begin{proof}
Recall that $R_0(0)$ is the same as $(-\Delta)^{-1}$ in dimensions $n \geq 3$.  Then
$R_0(0)\mu$ is the composition $j(-\Delta)^{-1}j^* \mu$, with both inclusions $j$ and $j^*$ being compact.

For $\lambda > 0$, the free resolvents $R_0^+(\lambda^2)$ map weighted $\dot{H}^{-1}(\R^n)$ into
weighted $\dot{H}^1(\R^n)$.  Then, with $\chi$ again a smooth cutoff to the support of $\mu$, $\chi R_0^+(\lambda^2) \chi$ is a bounded map from $\dot{H}^{-1}$
to $\dot{H}^1$ without additional weights due to the compact support of $\mu$.  We may write
\begin{equation*}
R_0^+(\lambda^2)\mu = j (\chi R_0^+(\lambda^2)\chi) j^* \mu
\end{equation*}
with $j$ and $j^*$ once again being compact.
\end{proof}

\begin{rmk} \label{rmk:continuity}
The derivative $\chi \frac{d}{d\lambda} R_0^+(\lambda^2) \chi$ is also a bounded map from $\dot{H}^{-1}$
to $\dot{H}^1$.  The same argument as above shows that the family of operators
$R_0^+(\lambda^2)\mu: L^2(\mu) \to L^2(\mu)$ are differentiable with respect to $\lambda$. 
\end{rmk}

\begin{proof}[Proof of Proposition~\ref{prop:selfadj}]
We can take advantage of the KLMN theorem~\cite[Theorem X.17]{ReSi2}
to produce a unique self-adjoint operator with the correct quadratic form on
$\dot{H}^1(\R^n)$  provided $\mu$ satisfies the form bound
\begin{equation}\label{eq:formbound}
\Big|\int_{\R^n}|\vphi(x)|^2\,d\mu \Big| \le a\| \vphi\|_{\dot{H}^1}^2 
+ b\|\vphi\|_{L^2}^2
\end{equation}
for some $a < 1$. 
For $|z|<1$ and $\vphi \in \dot{H}^1(\R^n)$, by Lemma~\ref{lem:HolderKato} we have
\begin{align*}
\int_{\R^n} |\vphi(x)|^2\,d\mu -& \int_{\R^n} |\vphi(x)|^2\, d(\tau_z \mu)
= \int_{\R^n} \big(|\vphi(x)|^2 - |\vphi(x+z)|^2\big)\,d\mu \\
&= \int_{\R^n} \big(\vphi(x) - \vphi(x+z)\big)\bar{\vphi}(x)\,d\mu  + \int_{\R^n} \vphi(x+z)\big(\bar{\vphi}(x) - \bar{\vphi}(x+z)\big) \,d\mu \\
\le\ &\|(j - \tau_{-z})\vphi\|_{L^2(\mu)}\big(\|\vphi\|_{L^2(\mu)} + \|\tau_{-z}\vphi\|_{L^2(\mu)}\big)
\les\ |z|^\beta \|\vphi\|_{\dot{H}^1}^2.
\end{align*}
It follows that
\begin{equation*}
\Big|\int_{\R^n} |\vphi(x)|^2\,d\mu - \int_{\R^n} |\vphi(x)|^2 (\mu * \eta_r)(x)\,dx \Big|
\les |r|^\beta \|\vphi\|_{\dot{H}^1}^2,
\end{equation*}
for a mollifier $\eta_r$ supported in a ball radius $r$.  At the same time $\mu * \eta_r$ is
a bounded function for each $r > 0$, so there is a second estimate 
\begin{equation*}
\Big|\int_{\R^n} |\vphi(x)|^2 (\mu * \eta_r)(x)\,dx \Big| \leq C_r \|\vphi\|_{L^2(\R^n)}^2.
\end{equation*}
Allowing $r$ to approach zero shows that~\eqref{eq:formbound} holds with any choice of $a > 0$.
\end{proof}

\begin{prop} \label{prop:eigenvalues}
If $\mu$ is a compactly supported $\alpha$-dimensional signed measure for some $\alpha > n-2$, then $-\Delta + \mu$ has finitely many negative eigenvalues and no other spectrum in $(-\infty, 0)$.
\end{prop}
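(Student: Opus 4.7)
The plan is to establish two facts: first, $\sigma_{\rm ess}(-\Delta+\mu)=[0,\infty)$, so that all spectrum lying in $(-\infty,0)$ is automatically discrete; and second, no sequence of negative eigenvalues accumulates at $0^-$.

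For the essential spectrum, Lemma~\ref{lem:compact} gives the compactness of $j\colon\dot{H}^1(\R^n)\to L^2(\mu)$, so the form $q_\mu[\vphi]=\int|\vphi|^2\,d\mu$ is relatively form-compact with respect to the free Dirichlet form $\|\nabla\vphi\|_2^2$.  Weyl's theorem for quadratic forms then yields $\sigma_{\rm ess}(-\Delta+\mu)=\sigma_{\rm ess}(-\Delta)=[0,\infty)$, and the spectrum lying strictly below the essential spectrum of a self-adjoint operator consists of isolated eigenvalues of finite multiplicity.

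For the finiteness, I would run a Birman--Schwinger argument after passing to a majorant.  Since $-\Delta+\mu\ge -\Delta-|\mu|$ as quadratic forms on $\dot{H}^1$, the min--max principle bounds the number of negative eigenvalues of $-\Delta+\mu$ by that of $-\Delta-|\mu|$, so it suffices to treat the positive-measure case.  For each $\kappa>0$ let $K(\kappa)$ be the operator on $L^2(|\mu|)$ defined as the composition
\[
 L^2(|\mu|)\xrightarrow{\ j^*\ }\dot{H}^{-1}(\R^n)\xrightarrow{\ R_0(-\kappa^2)\ }\dot{H}^1(\R^n)\xrightarrow{\ j\ }L^2(|\mu|),
\]
which realizes the formal symmetric operator $\sqrt{|\mu|}\,R_0(-\kappa^2)\,\sqrt{|\mu|}$.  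The factorization argument of Corollary~\ref{cor:compact} shows $K(\kappa)$ is compact on $L^2(|\mu|)$; it is self-adjoint and positive because $R_0(-\kappa^2)$ is, and the resolvent monotonicity $R_0(-\kappa^2)\le R_0(0)$ yields $K(\kappa)\le K(0)$ in the operator ordering.  The Birman--Schwinger principle then identifies the number of eigenvalues of $-\Delta-|\mu|$ below $-\kappa^2$ with the number of eigenvalues of $K(\kappa)$ that are $\ge 1$, and this count is bounded above, uniformly in $\kappa>0$, by the analogous count for the compact operator $K(0)$, which is finite.  Sending $\kappa\to 0^+$ then shows that $-\Delta-|\mu|$, and hence $-\Delta+\mu$, has only finitely many negative eigenvalues.

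The main obstacle is technical rather than conceptual: since the potential is a measure, the symbol $\sqrt{|\mu|}R_0\sqrt{|\mu|}$ has no literal pointwise meaning and must be defined through the compact inclusion $j$, and the classical eigenvalue-counting identity must be reverified in this form-theoretic setting.  The framework of Lemmas~\ref{prop:H1embedding}--\ref{lem:compact} and Corollary~\ref{cor:compact} already supplies the required compact factorization, after which Kato's original proof of the principle (via the equivalence of $H\psi=E\psi$ and $\psi=-R_0(E)\mu\psi$ read in $L^2(|\mu|)$) adapts verbatim.
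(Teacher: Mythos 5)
Your proof is correct, but it follows a genuinely different route from the paper's. The paper avoids the Birman--Schwinger machinery altogether: it factorizes
\[
-\Delta+\mu = (-\Delta)^{1/2}\bigl[I+(-\Delta)^{-1/2}\mu(-\Delta)^{-1/2}\bigr](-\Delta)^{1/2},
\]
observes that the bracketed operator is a compact self-adjoint perturbation of the identity on $L^2(\R^n)$ (its compactness is precisely the content of Lemma~\ref{lem:compact}, rewritten as $(-\Delta)^{-1/2}j^*\mu j(-\Delta)^{-1/2}$), so its negative-definite subspace is finite dimensional, and then shows the range of $P_{(-\infty,0)}$ injects into that subspace under $(-\Delta)^{1/2}$, using the form bound \eqref{eq:formbound} to guarantee that this range lies in $\dot{H}^1\cap L^2$. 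Your argument reaches the same conclusion by first establishing $\sigma_{\rm ess}=[0,\infty)$ via Weyl's theorem and then counting negative eigenvalues through the Birman--Schwinger operator $K(\kappa)$, using the monotonicity $K(\kappa)\le K(0)$ and the compactness of $K(0)$. Both are standard, but the paper's version is shorter and does not require passing to $|\mu|$, introducing the shift $-\kappa^2$, or re-deriving the Birman--Schwinger counting identity in the measure/form-theoretic setting (a point you correctly flag as needing verification). Conversely, your approach gives slightly more information en route (an explicit majorant for the eigenvalue count uniformly in $\kappa$, and the identification of the essential spectrum), which the paper gets for free once the negative spectral subspace is shown finite dimensional.
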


\begin{proof}
Since \eqref{eq:formbound} holds for some $0<a<1$, the operator $-\Delta + \mu$ is bounded from below.  Then the range of the spectral projection $P_{(-\infty,0)}$ is a closed subspace of $L^2$ contained inside the negative-definite subspace of the quadratic form $((-\Delta + \mu)\vphi, \vphi)$.  On this subspace we also have the bound
\begin{equation*}
\|\vphi\|_{\dot{H}^1}^2 \leq ((-\Delta + \mu)\vphi, \vphi) + a\|\vphi\|_{\dot{H}^1}^2 + b\|\vphi\|_{L^2}^2
\leq  a\|\vphi\|_{\dot{H}^1}^2 + b\|\vphi\|_{L^2}^2
\end{equation*}
and it follows that $\|\vphi\|_{\dot{H}^1} \les \|\vphi\|_{L^2}$.

Consider the factorization
\begin{equation*}
-\Delta + \mu = (-\Delta)^{1/2}[I + (-\Delta)^{-1/2}\mu(-\Delta)^{-1/2}](-\Delta)^{1/2}.
\end{equation*}
The central operator is a compact and self-adjoint perturbation of the identity acting on $L^2(\R^n)$, namely $I + (-\Delta)^{-1/2}j^*\mu j(-\Delta)^{-1/2}$.  Its negative-definite subspace is finite dimensional.  As observed above, the range of $P_{(-\infty,0)}$ is contained in $L^2(\R^n) \cap \dot{H}^1(\R^n)$.  The square-root of the Laplacian maps this space into $L^2$ in a one-to-one manner.  Thus the range of $P_{(-\infty,0)}$ is also finite dimensional, with dimension no larger than the negative-definite space of $I + (-\Delta)^{-1/2}\mu(-\Delta)^{-1/2}$.
\end{proof}

\section{Low energy estimates} \label{sec:LowEnergy}

At this point we establish a uniform bound on the low energy perturbed resolvent as an operator on $L^2(\mu)$.  Specifically, we show
\begin{lemma}\label{lem:lowenergybd}
	Let $\mu$ be a real-valued measure on $\R^n$, $n \geq 3$, with dimension $\alpha > n-2$.
	If $(-\Delta + \mu)$ has no eigenvalues at $\lambda \geq 0$, and (if $n = 3,4$) no resonances at $\lambda = 0$,
	then
	\begin{equation}
	\sup_{|\lambda| \leq L} \| R_\mu^+(\lambda^2)\mu \|_{L^2(\mu) \to L^2(\mu)} \leq C_L < \infty.
	\end{equation}
	For any $L>0$, with a fixed constant that depends on $L$.
\end{lemma}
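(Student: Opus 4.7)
The plan is to use the second resolvent identity
\[ R_\mu^+(\lambda^2)\mu = \bigl(I + R_0^+(\lambda^2)\mu\bigr)^{-1} R_0^+(\lambda^2)\mu \]
on $L^2(\mu)$ and reduce the lemma to a uniform invertibility statement for $I + R_0^+(\lambda^2)\mu$ on $\lambda \in [0,L]$. By Corollary~\ref{cor:compact} each $R_0^+(\lambda^2)\mu$ is compact on $L^2(\mu)$, and by Remark~\ref{rmk:continuity} the family depends continuously on $\lambda$ in operator norm. The Fredholm alternative then reduces pointwise invertibility to triviality of the kernel, and a standard continuity-plus-compactness argument on the interval $[0,L]$ promotes the pointwise bounds to the stated uniform bound.

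To rule out kernel elements, suppose $\psi \in L^2(\mu)$ satisfies $\psi = -R_0^+(\lambda^2)\mu\psi$, and set $\phi := -R_0^+(\lambda^2)\mu\psi$. The factorization used in Corollary~\ref{cor:compact} shows that $R_0^+(\lambda^2)\mu$ factors through $\dot{H}^1(\R^n)$, so $\phi \in \dot{H}^1(\R^n)$, and the inclusion $j$ from Lemma~\ref{prop:H1embedding} identifies $\phi$ with $\psi$ as an element of $L^2(\mu)$. Thus $\phi + R_0^+(\lambda^2)\mu\phi = 0$ in $\dot{H}^1$, equivalently $(-\Delta + \mu - \lambda^2)\phi = 0$ distributionally, with the outgoing radiation condition. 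At $\lambda = 0$ the function $\phi$ is either a zero eigenfunction (if $\phi \in L^2(\R^n)$) or a zero resonance in the sense of the introduction (if $\phi \in \dot{H}^1\setminus L^2$); both possibilities are excluded by hypothesis, forcing $\phi = 0$ and hence $\psi = 0$.

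For $\lambda > 0$ I would run a version of the classical Kato--Agmon--Rellich argument adapted to the measure-valued setting. Since $\mu$ is real, pairing $\phi = -R_0^+(\lambda^2)\mu\phi$ with $\mu\phi$ in the $L^2(\mu)$--$L^2(\mu)^*$ duality and taking imaginary parts yields
\[ 0 = \operatorname{Im}\langle\mu\phi,\phi\rangle = -\operatorname{Im}\langle \mu\phi,\, R_0^+(\lambda^2)\mu\phi\rangle. \]
The imaginary part of the free resolvent is, up to a positive constant, integration against surface measure on the Fourier sphere $\{|\xi|=\lambda\}$, so this identity forces $\widehat{\mu\phi}$ to vanish on that sphere. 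Because $\mu\phi$ is a compactly supported distribution, $\widehat{\mu\phi}$ is real-analytic, and the vanishing on $\{|\xi|=\lambda\}$ is enough to make $\widehat{\mu\phi}/(|\xi|^2-\lambda^2)$ square integrable. This promotes $\phi$ to an element of $L^2(\R^n)$, hence to an embedded eigenfunction at energy $\lambda^2>0$, contradicting the no-positive-eigenvalue hypothesis. So $\phi = 0$, and $\psi = 0$.

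The main obstacle is the step at $\lambda > 0$: the quantitative use of the vanishing of $\widehat{\mu\phi}$ on the sphere $|\xi|=\lambda$ to upgrade $\phi$ from $\dot{H}^1$ to $L^2(\R^n)$. Because $\mu$ is only $\alpha$-dimensional (with $\alpha > n-2$), the compactly supported measure $\mu\phi$ has only limited Fourier regularity, and one must verify that the tangential vanishing on the sphere combined with the compact spatial support indeed yields an $L^2$ quotient upon division by $|\xi|^2-\lambda^2$. Once triviality of the kernel is established for every $\lambda \in [0,L]$, the continuity of $R_0^+(\lambda^2)\mu$ in operator norm makes $\lambda \mapsto (I+R_0^+(\lambda^2)\mu)^{-1}$ continuous on the compact interval $[0,L]$, and the uniform bound claimed in the lemma follows.
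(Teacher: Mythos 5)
Your proposal follows the same overall strategy as the paper: reduce via the resolvent identity to invertibility of $I + R_0^+(\lambda^2)\mu$ on $L^2(\mu)$ uniformly over $\lambda \in [0,L]$, use the compactness and norm-continuity of $R_0^+(\lambda^2)\mu$ (Corollary~\ref{cor:compact}, Remark~\ref{rmk:continuity}) together with the Fredholm alternative, dismiss kernel elements at $\lambda=0$ directly from the hypotheses, and at $\lambda>0$ use the real-valuedness of $\mu$ to force $\widehat{\mu\psi}$ to vanish on the sphere $|\xi|=\lambda$.

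Where you diverge from the paper --- and where you rightly flag the remaining work --- is the upgrade from ``$\widehat{\mu\psi}$ vanishes on the sphere'' to ``$R_0^+(\lambda^2)\mu\psi \in L^2(\R^n)$.'' The paper accomplishes this by splitting $\mu\psi$ with a mollifier whose Fourier transform is $1$ on $\{|\xi| \leq 2\lambda\}$: the mollified piece lands in $L^{(2n+2)/(n+5)}(\R^n)$ with Fourier transform still vanishing on the sphere, so the Helmholtz restriction theorem of \cite{Go16} applies, while the remaining piece has Fourier support away from the sphere and is handled directly using $\mu\psi \in \dot{H}^{-1}$. Your analyticity route does in fact close, and the worry you voice about ``limited Fourier regularity'' of $\mu\psi$ is a red herring: $\mu\psi$ is a \emph{finite measure with compact support} (since $\psi \in L^2(\mu)$ and $|\mu|$ is finite), so by Paley--Wiener $\widehat{\mu\psi}$ is entire of exponential type, and its gradient is bounded because $x\mu\psi$ is also a compactly supported finite measure. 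Vanishing on the sphere plus the bounded gradient gives $|\widehat{\mu\psi}(\xi)| \les \big||\xi|-\lambda\big|$ there, so $\widehat{\mu\psi}/(|\xi|^2-\lambda^2)$ is bounded near the sphere; at low frequency both numerator and $1/(|\xi|^2-\lambda^2)$ are bounded; and at high frequency $|\widehat{\mu\psi}(\xi)|/(|\xi|^2-\lambda^2) \les |\widehat{\mu\psi}(\xi)|/|\xi| \in L^2$ since $j^*\mu\psi \in \dot{H}^{-1}$. Thus both routes work; the paper's argument via \cite{Go16} is more robust (it requires only an $L^p$ bound rather than analyticity and is the form needed in the companion positive-eigenvalue discussion of Remark~\ref{rmk:eval}), but for a compactly supported measure your Paley--Wiener shortcut is a legitimate and somewhat more elementary alternative.
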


\begin{proof}
The estimation of perturbed resolvents on a finite interval follows a standard 
procedure.  First, we express the perturbed resolvent $R_\mu^+(\lambda^2)\mu$
using the identity
\begin{equation} \label{eq:Res_identity}
R_\mu^+(\lambda^2)\mu = (I + R_0^+(\lambda^2)\mu)^{-1} R_0^+(\lambda^2)\mu.
\end{equation}
The operators $R_0^+(\lambda^2)\mu: L^2(\mu) \to L^2(\mu)$ are continuous with
respect to $\lambda$, so they are uniformly bounded over any finite interval.
If an inverse $(I + R_0^+(\lambda^2)\mu)^{-1}$ exists at each $\lambda \geq 0$, then the inverses
will be continuous, and uniformly bounded on each finite interval.

Suppose $I + R_0^+(\lambda_0^2)\mu$ fails to be invertible on $L^2(\mu)$
for some $\lambda_0 > 0$.
By the Fredholm alternative, there must exist a nontrivial $\psi \in L^2(\mu)$
belonging to its null space.  This function satisfies the bootstrapping relation
\begin{equation*}
\psi = -R_0^+(\lambda_0^2)\mu \psi.
\end{equation*}

Assuming $\mu$ is real-valued, the duality pairing
$ (\mu\psi, \psi) = \int_{\R^n} |\psi^2(x)|\,d\mu$ is real-valued as well.
It is also equal to the pairing 
\begin{equation*}
-(\mu\psi, R_0^+(\lambda_0^2)\mu\psi) = \int_{\R^n} \frac{|\widehat{\mu\psi}(\xi)|^2}{|\xi|^2 - (\lambda_0+i0)^2}\,d\xi
\end{equation*}
whose imaginary part is a multiple of $\int_{\{|\xi| = \lambda_0\}} |\widehat{\mu\psi}(\xi)|^2$.
In order for this quantity to be real, the Fourier transform of $\mu\psi$ must vanish on the sphere of radius $\lambda_0$.

Let $\eta$ be a mollifier whose Fourier transform is identically 1 when $|\xi| \leq 2\lambda_0$.
Convolution against $\eta$ is a bounded operator on $\dot{H}^{-1}$ and it maps finite measures on $\R^n$
to $L^p(\R^n)$, $1 \leq p \leq \infty$.  In particular, $\eta * \mu\psi \in L^{\frac{2n+2}{n+5}}(\R^n)$, along with the fact that its Fourier transform vanishes on the sphere of radius $\lambda_0$,
it follows from~\cite[Theorem~2]{Go16} that $R_0^+(\lambda_0^2)(\eta * \mu\psi) \in L^2(\R^n)$. 

Meanwhile $\mu\psi - (\eta * \mu\psi) \in \dot{H}^{-1}(\R^n)$, and it has Fourier support where $|\xi| > 2\lambda_0$.
On this region the free resolvent multiplier $(|\xi|^2 - \lambda_0^2)^{-1}$ is dominated by $|\xi|^{-1}$, hence we see that $R_0^+(\lambda_0^2)(\mu\psi - (\eta * \mu\psi)) \in L^2(\R^n)$.  This shows that $R_0^+(\lambda_0^2)\mu \psi\in L^2(\R^n)$ and hence $\psi\in L^2(\R^n)$, which contradicts the assumption that $\lambda_0>0$ is not an eigenvalue.  Hence $I + R_0^+(\lambda_0^2)\mu$ is invertible.

\end{proof}

\begin{rmk}\label{rmk:eval}
With the stronger assumption $\alpha > n - \frac{2}{n-1}$, one can follow the argument
in~\cite[Proposition~7]{Go16} to show that
$W^{\frac{1}{n+1}, \frac{2n+2}{n-1}} \subset L^2(\mu)$,
then apply~\cite{KoTa06} to conclude that there are no positive eigenvalues of $(-\Delta + \mu)$.
\end{rmk}

\section{High energy estimates} \label{sec:HighEnergy}

The estimates for $R_0^+(\lambda^2)\mu$ in the preceding sections are adequate for finite intervals
of $\lambda$, however the sharp weighted $L^2(\R^n)$ resolvent bound from~\cite{AgHo76}
only implies that
\begin{equation*}
\|R_0^+(\lambda^2)\|_{\dot{H}^{-1} \to \dot{H}^1} \les 1+ |\lambda|.
\end{equation*}
At high energy one needs to take advantage of the fact that for $f \in L^2(\mu)$, $\mu f$ is not a generic element 
of $\dot{H}^{-1}(\R^n)$.  Our main observation at high energy is that the free resolvent in fact has asymptotic
decay as an operator on $L^2(\mu)$.

\begin{theorem} \label{thm:resolvent}
Suppose $\mu$ is a compactly supported measure of dimension $\alpha > n-(1+\frac{1}{n-1})$.
There exists $\eps > 0$ so that the free resolvent satisfies
\begin{equation} \label{eq:resolvent}
\|R_0^+(\lambda^2)\mu f\|_{L^2(\mu)} \les \la \lambda\ra^{-\eps}
\|f\|_{L^2(\mu)}.
\end{equation}
\end{theorem}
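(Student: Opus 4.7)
The plan is to use the explicit Hankel function kernel of $R_0^+(\lambda^2)$ and split it at the scale $|x-y| \sim 1/\lambda$ into a near-diagonal piece and a far-field oscillatory piece, bounding the operator $L^2(\mu)\to L^2(\mu)$ norm of each separately. For $\lambda$ bounded the desired estimate follows from Corollary~\ref{cor:compact} and Remark~\ref{rmk:continuity}, so one may assume $\lambda \geq 1$. Writing $K_\lambda(x,y) = c_n(\lambda/|x-y|)^{(n-2)/2} H^{(1)}_{(n-2)/2}(\lambda|x-y|)$ and choosing a smooth cutoff $\chi(\lambda|x-y|)$, set $K_\lambda = K_\lambda^{\mathrm{near}} + K_\lambda^{\mathrm{far}}$.

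For the near-diagonal piece, the small-argument Hankel expansion gives $|K_\lambda^{\mathrm{near}}(x,y)| \les |x-y|^{2-n}\mathbf{1}_{|x-y|\les 1/\lambda}$, which by Schur's test combined with the Kato bound of Lemma~\ref{prop:dimensionKato} (with $\gamma = n-2$ and $r = 1/\lambda$) produces operator norm $\les \lambda^{-(\alpha - n + 2)}$, a positive power of $1/\lambda$ because $\alpha > n - 2$. The far-field piece has leading behavior $\lambda^{(n-3)/2}|x-y|^{-(n-1)/2}e^{\pm i\lambda|x-y|}$, which is essentially $\lambda^{-1}\widehat{d\sigma_\lambda}(x-y)$ where $d\sigma_\lambda$ denotes the surface measure on the sphere of radius $\lambda$; subleading terms in the Hankel expansion enjoy additional $(\lambda|x-y|)^{-1}$ decay and can be absorbed by the same Schur-based argument as in the near-diagonal case. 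Thus the bilinear form associated to $K_\lambda^{\mathrm{far}}$ reduces, via Plancherel, to $\lambda^{-1}\int_{|\xi|=\lambda}\widehat{f\mu}(\xi)\overline{\widehat{g\mu}(\xi)}\,d\sigma_\lambda(\xi)$, and what remains is a Stein--Tomas-type Fourier extension estimate for the $\alpha$-dimensional measure $\mu$,
\[
\|\widehat{f\mu}\|_{L^2(d\sigma_\lambda)}^2 \les \lambda^{1-\eps'}\|f\|_{L^2(\mu)}^2
\]
for some $\eps' > 0$.

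The main obstacle is this fractal restriction estimate, and it is here that the sharp dimensional threshold $\alpha > n - (1 + \frac{1}{n-1})$ enters. It is precisely the exponent at which the Stein--Tomas critical index $\frac{2(n+1)}{n-1}$ on the sphere, combined with the Mockenhaupt--Mitsis--Bak--Seeger fractal extension principle, yields a positive gain $\eps'$ in $\lambda$ rather than mere boundedness. Direct estimates that take $|\widehat{d\sigma_\lambda}|$ in absolute value produce growth $\lambda^{(n-3)/2}$ in dimensions $n \geq 4$, so the sphere's oscillations must be fully exploited through genuine restriction theory. Recent advances by Luca--Rogers~\cite{LuRo19} and Du--Zhang~\cite{DuZh19} supply the necessary refinements, typically via a Littlewood--Paley dyadic frequency decomposition together with the curvature-driven decay of $\widehat{d\sigma_\lambda}$. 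Once this is in hand, the theorem follows with $\eps = \min(\alpha - n + 2,\eps')$.
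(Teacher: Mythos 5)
Your decomposition is at the right level of ideas — split the resolvent near the diagonal from the oscillatory far field, kill the near‑diagonal piece with the Kato/Schur bound of Lemma~\ref{prop:dimensionKato}, and feed the oscillatory piece into the fractal restriction theory of Du--Zhang. The near‑diagonal step is correct and is in fact a slightly cleaner, physical‑space version of what the paper does in frequency space with the cutoff $1-\phi(\xi/\lambda)$ and the $\dot{H}^{\pm(n-\gamma)/2}$ Sobolev embeddings.

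The gap is in the far‑field reduction. The kernel $K_\lambda^{\mathrm{far}}(x,y)\approx\lambda^{(n-3)/2}|x-y|^{-(n-1)/2}e^{i\lambda|x-y|}$ has only the \emph{outgoing} phase $e^{+i\lambda|x-y|}$, not the symmetric combination $e^{i\lambda|x-y|}+e^{-i\lambda|x-y|}$ that would make it (a multiple of) $\widehat{d\sigma_\lambda}$. Its Fourier multiplier is therefore not the surface measure on $\{|\xi|=\lambda\}$; it is the full boundary value $\frac{1}{|\xi|^2-(\lambda+i0)^2}$ (minus a tame near‑diagonal correction), which decomposes as the surface‑measure term $\frac{i\pi}{\lambda}\,d\sigma_{\lambda S^{n-1}}$ \emph{plus} the principal‑value multiplier $\mathrm{p.v.}\,\frac{1}{|\xi|^2-\lambda^2}$. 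Only the first summand is captured by your Plancherel identity $\lambda^{-1}\int_{|\xi|=\lambda}\widehat{f\mu}\,\overline{\widehat{g\mu}}\,d\sigma_\lambda$. The principal‑value part has a nonintegrable singularity at $|\xi|=\lambda$ and cannot be bounded by a single restriction estimate in absolute value; it is precisely the obstruction that forces the paper to foliate by spheres (the functions $F_s$), prove both $\|F_s\|_{L^2(\mu)}$ and $\|\frac{d}{ds}F_s\|_{L^2(\mu)}$ bounds via \eqref{eq:UsefulRestriction}–\eqref{eq:UsefulRestriction2}, and then integrate by parts once across the singularity $s=\lambda$ in the principal‑value integral \eqref{eq:pvIntegral}. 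That step is where the $\log\lambda$ loss in \eqref{eq:Resolvent_High} comes from, and it is absent from your argument. As written, your proof establishes decay for $\mathrm{Im}\,R_0^+(\lambda^2)\mu$ but not for $\mathrm{Re}\,R_0^+(\lambda^2)\mu$ localized to $|\xi|\sim\lambda$.

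A secondary concern: the Mockenhaupt--Mitsis--Bak--Seeger extension machinery you invoke requires pointwise Fourier decay $|\widehat{\mu}(\xi)|\lesssim|\xi|^{-\beta/2}$, which is not assumed here (e.g.\ $\mu$ can be surface measure on a flat piece of a hyperplane, with no Fourier decay in normal directions). That framework does not produce the threshold $\alpha>n-(1+\frac{1}{n-1})$. You do need the purely Hausdorff‑dimensional restriction bound of Du--Zhang (equivalently, Theorem~2.3 of \cite{DuZh19} rescaled as in \eqref{eq:UsefulRestriction}), as you mention at the end; citing it directly and dropping the Stein--Tomas/Mockenhaupt heuristic would avoid suggesting a hypothesis on $\widehat{\mu}$ that the theorem does not make.
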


There are close connection between the free resolvent $R_0^+(\lambda^2)$
and the restriction of Fourier transforms to the sphere $\lambda\mathbb{S}^2$.
We make use of a Fourier restriction estimate proved by Du and Zhang~\cite{DuZh19}.
Theorem 2.3 of~\cite{DuZh19} asserts that for a function $f \in L^2(\R^{n-1})$ with 
Fourier support in the unit ball, and a measure $\mu_R = R^{\alpha} \mu(\,\cdot\,/R)$,
\begin{equation*}
\|e^{it\Delta}f\|_{L^2(\mu_R)} \les R^{\frac{\alpha}{2n}} \|f\|_{L^2(\R^{n-1})},
\end{equation*}
for sufficiently large $R$. The Schr\"odinger evolution $e^{it\Delta}f$ is the inverse Fourier transform (in $\R^n$) of the
measure $\hat{f}\in L^2(\R^{n-1})$ lifted onto the paraboloid $\Sigma = \{\xi_n = |\xi_1|^2 + \cdots + |\xi|_{n-1}^2\}$. 
The theorem is then equivalent to the statement
\begin{equation*}
\| (\widehat{g d\Sigma})\|_{L^2(\mu_R)} \les R^{\frac{\alpha}{2n}} \|g\|_{L^2(\Sigma)}
\end{equation*}
for functions $g \in L^2(\Sigma \cap B(0,1))$.  The use of forward versus inverse Fourier transform does not affect the inequality.

It is well known that the bounded subset of the paraboloid $\Sigma$ can be
replaced with any other uniformly convex bounded smooth surface.  In this case we wish to apply the result to the unit
sphere instead.  For any $g \in L^2(S^{n-1})$,
\begin{equation*}
\| \hat{g} \|_{L^2(\mu_R)} \les R^{\frac{\alpha}{2n}} \| g\|_{L^2(S^{n-1})}.
\end{equation*}
The dual statement is
\begin{equation*}
\big \| \widehat{\mu_Rf}\big|_{|\xi| = 1} \big\|_{L^2(S^{n-1})} \les R^{\frac{\alpha}{2n}} \|f\|_{L^2(\mu_R)}.
\end{equation*}

Now we reverse some of the scaling relations.  Given $f \in L^2(\mu)$, let $f_R(x) = R^{-\frac{\alpha}{2}}f(x/R)$
so that $\|f_R\|_{L^2(\mu_R)} = \|f\|_{L^2(\mu)}$.  Then $\widehat{\mu_Rf_R}(\xi) = R^{\frac{\alpha}{2}} \widehat{\mu f}(R\xi)$. 
It follows that
\begin{equation} \label{eq:UsefulRestriction}
\big\| \widehat{\mu f} \big|_{|\xi| = R} \big\|_{L^2(R S^{n-1})}
= R^{\frac{n-1-\alpha}{2}}\big\|\widehat{\mu_Rf_R} \big|_{|\xi| = 1} \big\|_{L^2(S^{n-1})}
\les R^{\frac{n-1}{2} - \frac{\alpha}{2}(1 - \frac{1}{n})} \|f\|_{L^2(\mu)}.
\end{equation}

Thanks to the compact support of $\mu$, the $L^2(\mu)$ norm of $(1+ |x|)f$ is comparable to that of $f$.
That allows for control of the derivatives of $\widehat{\mu f}$ with the same restriction bound as in~\eqref{eq:UsefulRestriction}.
In particular we can bound the outward normal gradient of $\widehat{\mu f}$ as
\begin{equation} \label{eq:UsefulRestriction2}
\Big\|{\textstyle \frac{\xi}{|\xi|}}
 \cdot\nabla_\xi(\widehat{\mu f})(\xi)\big|_{|\xi|=R}\Big\|_{L^2(RS^{n-1})}
\les R^{\frac{n-1}{2} - \frac{\alpha}{2}(1-\frac{1}{n})} \|f\|_{L^2(\mu)}.
\end{equation}

\begin{proof}[Proof of Theorem~\ref{thm:resolvent}]
In light of Lemma~\ref{lem:lowenergybd}, we need only consider $|\lambda|\gtrsim 1$.
The specific inequality we derive has the form
\begin{equation} \label{eq:Resolvent_High}
\|R_0^+(\lambda^2)\mu f\|_{L^2(\mu)} \les \lambda^{n-2 - \alpha(\frac{n-1}{n})}\log \lambda
\|f\|_{L^2(\mu)}
\end{equation} Our assumption $\alpha > n - (1 + \frac{1}{n-1})$ is chosen to make
the exponent negative on the right-hand side.

The free resolvent $R^\pm_0(\lambda^2)$ acts by multiplying Fourier transforms
pointwise by the distribution
\begin{equation*}
\frac{1}{|\xi|^2 - \lambda^2} 
\pm i\frac{\pi}{\lambda}\,d\sigma(|\xi| = |\lambda|).
\end{equation*}
The surface measure term is $\frac{C}{\lambda}$ times the $T^{*}T$ composition of
the operator in~\eqref{eq:UsefulRestriction}.  Thus it has an operator norm bound
controlled by $\lambda^{[n-1 - \alpha(\frac{n-1}{n})] - 1}$.

The multiplier term can be split into two parts depending on whether $|\xi|\sim \lambda$ or not.
Let $\phi$ be a smooth function supported in the annulus $\frac12 \leq |\xi|< 2$ that is identically 1 when
$\frac34 \leq |\xi| \leq \frac32$.  Note that there is an upper bound
\begin{equation*}
\frac{1 - \phi(\xi/\lambda)}{|\xi|^2 - \lambda^2} \les \min(\lambda^{-2}, |\xi|^{-2}) 
\leq \frac{1}{\lambda^{\gamma - (n-2)}|\xi|^{n-\gamma}}
\end{equation*} 
for any $n-2 \leq \gamma \leq n$.  Hence this part of the free resolvent maps $\dot{H}^{-\frac{n-\gamma}{2}}(\R^n)$
to $\dot{H}^{\frac{n-\gamma}{2}}(\R^n)$ with an operator norm less than $\lambda^{-(\gamma - (n-2))}$.

The proof of Lemma~\ref{prop:H1embedding} can be modified trivially to show that
there is a continuous embedding $j: H^{\frac{n-\gamma}{2}}(\R^n) \mapsto L^2(\mu)$ whenever $\gamma < \alpha$
(as required by Lemma~\ref{prop:dimensionKato}).  Since $\alpha > n-2$ this includes
a nonempty interval $\gamma \in [n-2, \alpha)$.

By expanding out the free resolvent as $R_0^+(\lambda^2)\mu = j R_0^+(\lambda^2) j^* \mu$, we see that
frequencies $|\xi| \not\sim \lambda$ give rise to an operator on $L^2(\mu)$ with norm bound
$\lambda^{-(\gamma - (n-2))}$.

The portion of the free resolvent with frequency $|\xi| \sim \lambda$ will be handled by restricting
$\widehat{\mu f}$ to spheres of radius $s$, then integrating the results.
For each $\frac{\lambda}{2}< s <2\lambda$ define
\begin{equation*}
F_s(x) := \mu f * \widecheck{d\sigma}(\,\cdot\,/s)
\end{equation*}
with $d\sigma$ being the surface measure of the unit sphere.
This way, $s^{n-1}\widehat{F}_s$ is the restriction of $\widehat{\mu f}$ to the sphere $|\xi| = s$.
By the Parseval identity we have an inner product formula for $f, g \in L^2(\mu)$,
\begin{equation*}
\langle F_s, g\rangle_{L^2(\mu)} = \int_{S^{n-1}} \widehat{\mu f}(s\omega) \overline{\widehat{\mu g}(s\omega)}\,d\omega
= s^{1-n} \la \widehat{\mu f}, \widehat{\mu g}\rangle_{L^2(sS^{n-1})}.
\end{equation*}

Inequality~\eqref{eq:UsefulRestriction} shows that $\|F_s\|_{L^2(\mu)} \les s^{-\alpha(\frac{n-1}{n})}\|f\|_{L^2(\mu)}$.
If one takes a derivative with respect to $s$, it is easy to apply the product rule to the middle expression.
Then the bounds~\eqref{eq:UsefulRestriction} and~\eqref{eq:UsefulRestriction2} show that 
$\|\frac{d}{ds}F_s\|_{L^2(\mu)} \les s^{-\alpha(\frac{n-1}{n})}\|f\|_{L^2(\mu)}$ as well.

Now the remaining part of the free resolvent appears as a principal value integral
\begin{equation} \label{eq:pvIntegral}
\Big\|p.v. \int_{\lambda/2}^{2\lambda} 
\Big(\frac{s^{n-1} \phi(\frac{s}{\lambda})}{s+\lambda} F_s\Big) 
 \frac{1}{s-\lambda}\,ds\Big\|_{L^2(\mu)}.
\end{equation}

Based on the discussion above, both $s^{n-1}\phi(\frac{s}{\lambda})F_s/(s+\lambda)$ and
$\frac{d}{ds}\big[s^{n-1}\phi(\frac{s}{\lambda})F_s/(s+\lambda)\big]$ are bounded in $L^2(\mu)$
with norm less than $\lambda^{n-2-\alpha(\frac{n-1}{n})}$ so long as $s \sim \lambda$ and $\lambda \geq 4$.
The desired bound~\eqref{eq:Resolvent_High} follows by bringing these norms inside the integral
when $|s-\lambda| > 1$, and integrating by parts once in the more singular interval $|s-\lambda| \leq 1$
before bringing the norms inside.  The resulting integral in each case is bounded by $\log \lambda$.
\end{proof}

We are now able to prove the uniform resolvent bounds in Theorem~\ref{thm:resolvents} and consequently the Strichartz estimates in Theorem~\ref{thm:main}.

\begin{proof}[Proof of Theorem~\ref{thm:resolvents}]
The uniform bound~\eqref{eq:UnifFreeRes} combines low-energy existence from Corollary~\ref{cor:compact}, uniformly on bounded intevals of $\lambda$ from Remark~\ref{rmk:continuity}, and  decay as $\lambda \to \infty$ from Theorem~\ref{thm:resolvent}.  The low-energy part of~\eqref{eq:UnifPerturbedRes} is stated as Lemma~\ref{lem:lowenergybd}.
At high energies, we once again apply the resolvent identity~\eqref{eq:Res_identity}.  Theorem~\ref{thm:resolvent} provides decay of $R_0^+(\lambda^2)\mu$, and once its norm is less than $\frac12$, then the perturbation $(I + R_0^+(\lambda^2)\mu)^{-1}$ and consequently $R_\mu^\pm(\lambda^2)$ are uniformly bounded as well.
\end{proof}

\begin{proof}[Proof of Theorem~\ref{thm:main}]
The derivation of local decay estimates~\eqref{eq:freesmoothing} and~\eqref{eq:smoothing} for the Schr\"odinger equation from uniform resolvent bounds follows Kato's argument~\cite{Ka65} with minimal adaptation. One can freely interchange the order of the $L^2_t$ and $L^2(\mu)$ norms.  Then by a $TT^*$ argument, and using the fact that multiplication by $\mu$ is a unitary map between $L^2(\mu)$ and its dual space, 
\begin{equation*}
\|e^{it\Delta}f  \|_{L^2(\mu)L^2_t} \leq C\|f\|_2 \quad \text{if and only if} \quad
\Big\|\int_\R e^{i(t-s)\Delta}\mu g(\,\cdot\,, s)\,ds \Big\|_{L^2(\mu)L^2_t} \leq C^2 \|g\|_{L^2(\mu)L^2_t}.
\end{equation*}
After applying Plancherel's identity to a partial Fourier transform in the time variable, with $\lambda$ as the dual variable to $t$, this is again equivalent (up to a constant) to the bound
\begin{equation*}
\sup_{\lambda \geq 0} \|(R_0^+(\lambda) - R_0^-(\lambda))\mu\|_{L^2(\mu)\to L^2(\mu)} \leq C^2.
\end{equation*}

The derivation of~\eqref{eq:smoothing} is identical except that the Fourier transform of $e^{i(t-s)(-\Delta+\mu)}P_{ac}$ generates the difference of perturbed resolvents $R_\mu^+(\lambda) - R_\mu^-(\lambda)$.  Negative values of $\lambda$ are excluded because the absolutely continuous spectrum of $-\Delta + \mu$ is still $[0,\infty)$.

The Strichartz inequalities are proved via the argument by Rodnianski and Schlag~\cite{RoSc04}. Use Duhamel's formula to write out the perturbed evolution as
\begin{equation*}
e^{it(-\Delta+\mu)}P_{ac}f = e^{-it\Delta}P_{ac}f + i\int_0^te^{-i(t-s)\Delta}\mu e^{is(-\Delta+\mu)}P_{ac}f\,ds
\end{equation*}
for $t>0$.  Note that $P_{ac}$ is an orthogonal projection, so it is a bounded operator on $L^2(\R^n)$. 
The free evolution term satisfies all Strichartz inequalities including the $p=2$ endpoint.  For the inhomogeneous term, our local decay bound~\eqref{eq:smoothing} shows that
$\mu e^{is(-\Delta+\mu)}P_{ac}f \in L^2_t L^2(\mu)^*$.  The dual statement to~\eqref{eq:freesmoothing} together with the free Strichartz inequalities imply that
\begin{equation*}
\Big\| \int_\R e^{-i(t-s)\Delta} G(\,\cdot\,, s)\,ds \Big\|_{L^p_t L^q_x} \les \|G\|_{L^2_t L^2(\mu)^*}.
\end{equation*}
An application of the Christ-Kiselev lemma (for example as stated in \cite{RoSc04}, Lemma 4.2) shows that the same bound holds for the desired domain of integration $0 \leq s \leq t$ provided $p>2$.
\end{proof}

\end{document}